\newcommand{\R}{\mathbb{R}}
\newcommand{\N}{\mathbb{N}}
\newcommand{\Hcurl}[1]{H(\operatorname{curl};#1)}
\newcommand{\Hocurl}[1]{H_0(\operatorname{curl};#1)}
\newcommand{\Hscurl}[2]{H^{#1}(\operatorname{curl};#2)}
\newcommand{\Hspcurl}[3]{H^{#1,#2}(\operatorname{curl};#3)}
\newcommand{\ov}[1]{\overline{#1}}
\newcommand{\eps}{\epsilon}
\newcommand{\veps}{\varepsilon}
\newcommand\supp{\operatorname{supp}}
\newcommand\p{\partial}
\newcommand\curl{\operatorname{curl}}
\newtheorem{thm}{Theorem}[section]
\newtheorem{cor}[thm]{Corollary}
\newtheorem{lem}[thm]{Lemma}
\newtheorem{prop}[thm]{Proposition}
\theoremstyle{definition}
\numberwithin{equation}{section}
\begin{document}

\title[]{
On quantitative Runge approximation for the time harmonic Maxwell equations
}
\date{}
\keywords{Runge approximation; Maxwell's equations; Cauchy problem; Unique continuation}

\author[]{Valter Pohjola}
\address{BCAM - Basque Center for Applied mathematics}
\email{valter.pohjola@gmail.com}

\maketitle

\begin{abstract} 
    Here we derive some results on so called quantitative Runge approximation
    in the case of the time-harmonic Maxwell equations.
    This provides a Runge approximation having more explicit quantitative information.
    We additionally derive some results on the 
    conditional stability of the Cauchy problem for the time-harmonic Maxwell equations.    
\end{abstract}

\tableofcontents

\section{Introduction} \label{sec_intro}

\noindent
The time-harmonic Maxwell equations are given by the system
\begin{align} \label{eq_MEsys}
\left\{
\begin{aligned}
\nabla \times E -  i\omega \mu H &= 0, & & \textnormal{in} \,\Omega,\\
\nabla \times H +  i\omega \veps E &= 0, & & \textnormal{in} \,\Omega.\\
\end{aligned}
\right.
\end{align}
Here $\Omega \subset \R^3$ is a bounded domain.
In \eqref{eq_MEsys} $E$ and $H$ are vector fields corresponding to the electric and magnetic fields.
Furthermore $\veps, \mu \in W^{1,\infty}(\Omega;\R^{3\times 3})$  
are symmetric matrices with Lipschitz continuous coefficients that model an anisotropic permittivity and permeability
of a medium
(for the notations on the function spaces used here and elsewhere in this paper 
see the appendix, section \ref{sec_fspaces}). 

The system \eqref{eq_MEsys} is obtained by considering time-harmonic fields of the 
form  $E'(x,t) = E(x)e^{i\omega t}$ and $H'(x,t) = H(x)e^{i\omega t}$ in the time dependent
form of Maxwell's equations and factoring out the time dependence.
The time independent system  \eqref{eq_MEsys} provides a basic tool 
for understanding electromagnetic wave phenomena. For more details on this see for instance \cite{J}. 
Here we assume that the angular frequency $\omega$ is such that the system \eqref{eq_MEsys}
in conjunction with the boundary condition $(\nu \times E) |_{\p\Omega} = f \in TH_D^{-1/2}(\p\Omega)$,
(see section \ref{sec_fspaces})
is well-posed. For more on the well-posedness see section 2 of \cite{HLL}. 

\bigskip
\noindent
The main aim of this paper is to study quantitative Runge approximation for the
system \eqref{eq_MEsys}.
The term Runge approximation refers  to the approximation of a solutions of 
an partial differential equation (PDE) on a given set, with solutions on a larger set  containing the given set.
The Runge Theorem in complex analysis, is an instance of this type of an approximation property.
Similar ways of approximating are also 
possible in the context of (other) PDEs, and these ideas were first considered in \cite{L} and \cite{M},
and more generally by \cite{B2}.
Runge approximation has become a increasingly used tool when dealing with 
diverse range of inverse problems.
For some recent applications in different contexts see e.g. \cite{HPS,RS2,LLS,DKSU}.
The quantitative version of Runge approximation, which is the focus of this paper,
was first considered in \cite{RS}, where it was studied for Schr\"odinger type equations. 
It should also be noted that quantitative Runge approximation is
closely connected to work done in the context of control theory \cite{R}.

The Runge approximation problem we consider here can be described as follows.
It will be convenient to abbreviate the above system \eqref{eq_MEsys} by the notation
$$
\mathcal{M}(E,H) = 0, \text{ in }\Omega.
$$
We will further make the assumption that
\begin{align} \label{eq_ellip}	
\left\{
\begin{aligned}
&0 \leq c|\xi|^2  \leq  \mu(x) \,\xi \cdot \xi  < c^{-1}|\xi|^2, \\
&0 \leq c|\xi|^2  \leq  \veps(x) \,\xi \cdot \xi  < c^{-1}|\xi|^2, \\
\end{aligned}
, \quad \text{ for } x \in \Omega \text{ and } \xi \in \R^3,
\right.
\end{align}
and  where $c >0$.
We will let $\Omega \subset \R^3$ be a bounded domain with  Lipschitz boundary. First
we define the set of solutions with boundary data supported on an open set 
$\Gamma \subset \p\Omega$, i.e. let
$$
\mathcal{S}_\Omega :=\big\{ (E,H) \in \Hcurl{\Omega}^2 \,:\, \mathcal{M}(E,H)=0,
\, \supp(\nu\times E|_{\p\Omega})\subset \Gamma\big\}. 
$$
For the notations used here see section \ref{sec_fspaces}.
Notice that $\Gamma \subset \p \Omega$ might be an arbitrarily  small part
of $\p \Omega$.
Next we define  $\mathcal{S}_A$, where $A \subset \subset \Omega$ has a Lipschitz boundary,
and $\Omega \setminus \ov{A}$ is connected. The set $\mathcal{S}_A$ is given by
\begin{align}  \label{eq_SA}
\mathcal{S}_A :=\big \{ (E,H) \in \Hcurl{A}^2 \;:\; \mathcal{M}(E,H)=0, 
\text{ in } A \big \}. 
\end{align}
The Runge approximation problem is then to show that we can approximate fields in $\mathcal{S}_A$,
by the fields $\big( E, H\big)|_A$, where $(E,H) \in S_\Omega$.
The main result is  the following quantitative Runge approximation property, 
which is proved in section \ref{sec_qrunge}.

\begin{restatable}{thm}{thmOne} \label{thm_qrunge}
Given $(E, H) \in \mathcal{S}_A \cap L^{p}(A)^6$, with $p>2$, there are constants
$m > 0$, $C > 0$, and $(E_j,H_j) \in \mathcal{S}_\Omega$, $j \in \N$, such that
$$
\big\|(E,H) - (E_j ,H_j)|_{A} \big\|_{\Hspcurl{0}{2}{A}^2} 
\leq \frac{C}{j} \big\| (E, H)\big\|_{\Hspcurl{0}{q}{A}^2},
$$
for $q \in (2, p\,]$, and for which
$$
\| (E_j,  H_j ) \|_{\Hcurl{\Omega}^2}  
\leq C  e^{Cj^{2/m}} \| (E, H)  \|_{L^2(A)^6}. 
$$
\end{restatable}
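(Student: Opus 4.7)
My plan follows the duality-plus-quantitative-unique-continuation template of R\"uland--Salo \cite{RS}, adapted to the Maxwell system. I would first recast the Runge approximation as a statement about the range of a compact operator by introducing the bounded linear solution operator
$$
T\colon f\longmapsto (E_f,H_f)|_A,
$$
sending admissible tangential data $f\in TH_D^{-1/2}(\p\Omega)$ with $\supp f\subset\Gamma$ to the $L^2(A)^6$-restriction of the unique $(E_f,H_f)\in\Hcurl{\Omega}^2$ satisfying $\mathcal{M}(E_f,H_f)=0$ and $\nu\times E_f=f$. Qualitatively, $\overline{\Ran T}\supset\mathcal{S}_A$ in $L^2(A)^6$ by Hahn--Banach together with weak unique continuation across $\Gamma$ for the adjoint Maxwell system; the theorem is the quantitative refinement of this density.

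The central analytic step is a logarithmic stability estimate for the Cauchy problem: there exist $C,m>0$ such that every solution $(u,v)$ of the (adjoint) system \eqref{eq_MEsys} in $\Omega$ satisfies
$$
\|(u,v)\|_{L^2(A)^6}\leq C\,\|(u,v)\|_{\Hcurl{\Omega}^2}\,\Bigl|\log\tfrac{\|(u,v)\|_{\Hcurl{\Omega}^2}}{\|\text{tangential Cauchy data on }\Gamma\|}\Bigr|^{-m/2}.
$$
This is the conditional stability alluded to in the abstract. I would derive it by passing to a second-order elliptic system: combining $\mathcal{M}(E,H)=0$ with the divergence conditions $\nabla\cdot(\veps E)=\nabla\cdot(\mu H)=0$ implied by \eqref{eq_MEsys}, one obtains an elliptic principal part for $(E,H)$, whereupon a standard Carleman estimate followed by a three-ball iteration delivers the logarithmic bound. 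The Lipschitz regularity of $\veps,\mu$ in \eqref{eq_ellip} is exactly what the Carleman argument requires, and the explicit exponent $m>0$ emerges from the iteration.

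With the compactness of $T$ and its singular values $\{\sigma_j\}$, the stability above translates, via a standard modulus-of-continuity lemma (see \cite{RS}), into the bound $\sigma_j\gtrsim e^{-cj^{2/m}}$. Truncating the SVD expansion of $(E,H)\in\mathcal{S}_A$ at level $j$ produces $(E_j,H_j)=Tf_j\in\mathcal{S}_\Omega$ with $\|f_j\|\lesssim\sigma_j^{-1}\|(E,H)\|_{L^2(A)^6}$, yielding the exponential bound on $\|(E_j,H_j)\|_{\Hcurl{\Omega}^2}$. The assumption $(E,H)\in L^p(A)^6$ with $p>2$ enters via an interpolation step that converts the spectral-tail decay into the polynomial rate $C/j$ for the approximation error, and interior regularity for Maxwell (exploiting $\curl E=i\omega\mu H$, $\curl H=-i\omega\veps E$) upgrades the $L^2$-approximation to the required $\Hspcurl{0}{2}{A}^2$-approximation.

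I expect the main obstacle to be the logarithmic Cauchy stability. Maxwell is not elliptic as a first-order system, so the reduction to an elliptic second-order problem must be performed compatibly with the anisotropic Lipschitz coefficients, and the exponent $m$ must be tracked explicitly through the Carleman inequality and the three-ball iteration. Once that sharp stability is in hand, the remaining steps reduce to the now-standard abstract machinery of \cite{RS}.
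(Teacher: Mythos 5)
Your overall architecture matches the paper's: formulate the problem via a solution operator from tangential boundary data into $L^2(A)^6$, obtain a singular value decomposition, and convert a logarithmic Cauchy-stability estimate (proved from three-ball inequalities/Carleman machinery) into a spectrally-truncated Runge approximant, then optimize the truncation level. The reduction of the Maxwell system to a second-order elliptic equation $\nabla\times(\mu^{-1}\nabla\times E)-\omega^2\veps E=0$ for the stability step, and the final use of $\nabla\times(E-E_j)=i\omega\mu(H-H_j)$ to upgrade $L^2$-control to $H(\operatorname{curl})$-control, are both in the paper. However, there are two concrete gaps.

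First, you claim the Cauchy stability "translates, via a standard modulus-of-continuity lemma (see \cite{RS}), into the bound $\sigma_j\gtrsim e^{-cj^{2/m}}$," and then build the approximant from this lower bound on individual singular values. Neither \cite{RS} nor the present paper proves or uses such a singular value estimate, and it is not clear how to extract it from the conditional stability result: the estimate controls solutions in terms of a ratio involving Cauchy data, not the size of $\mathcal{A}^*$ on individual eigenvectors. The paper instead truncates at a \emph{threshold} $\alpha$ on the singular values, sets $r_\alpha=W-\mathcal{A}R_\alpha W=\sum_{\sigma_k<\alpha}c_k\Psi_k$, solves the adjoint source problem with source $r_\alpha$, and then applies the quantitative unique continuation estimate (Proposition~\ref{prop_QUCP}) directly to that residual — using crucially that $\|\mathcal{A}^*r_\alpha\|\leq\alpha\|r_\alpha\|_{L^2}$. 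Optimizing $\alpha$ at the end gives both stated bounds. This is a genuinely different mechanism, and the singular-value-bound route would require a separate (and, as far as I can see, missing) argument.

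Second, you do not explain precisely how the assumption $(E,H)\in L^p(A)^6$, $p>2$, is used. In the paper it enters in two places that your proposal does not reach: (i) the conditional Cauchy stability (Theorem~\ref{thm_CauchyProb_1}) requires $L^p$ solutions with $p>2$ in order to control the boundary layer $\Omega\setminus\Omega_r$ via H\"older when proving the global propagation of smallness (since $\Hcurl{\Omega}$ admits no Sobolev embedding gain), and (ii) the conversion of the logarithmic quantitative UCP estimate into a form involving only $\alpha$ proceeds by interpolating in the Besov scale $TB^{-1/q}_D(\Gamma)$ between a $\sigma_k<\alpha$ bound on $\|\mathcal{A}^*r_\alpha\|_{TH_D^{-1/2}}$ and a Kar--Sini $L^{q_0}$ a priori estimate, which requires $r_\alpha\in L^{q_0}$ for some $q_0>2$; this is where $(E,H)\in L^p(A)^6$ is consumed. "Interpolation converting spectral-tail decay into a polynomial rate" is too vague to substitute for this. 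Finally, a small point: the step you describe as "interior regularity for Maxwell upgrades $L^2$ to $H(\operatorname{curl})$" is really just plugging the Maxwell equations back in; interior elliptic regularity (Lemma~\ref{lem_intReg}) is instead what furnishes the compactness of $\mathcal{A}$, since $\Hcurl{\Omega}\hookrightarrow L^2$ is not compact but $H^1_{loc}\hookrightarrow L^2$ is.
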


\noindent
The main novelty of this result is that it extends the quantitative Runge approximation
result in \cite{RS} to  the Maxwell system \eqref{eq_MEsys}. This result is also
connected to the study of localization of solutions to \eqref{eq_MEsys} in the fixed frequency case.
The localization of solutions play an important part in certain reconstruction methods in 
inverse problems, and in particular for the so called monotonicity method (see e.g. \cite{G} and \cite{HPS}).
In \cite{HLL}, the authors studied the localization
of solutions to \eqref{eq_MEsys}, and gave  a way to construct a sequence of solutions
whose $L^2$-norm increases in a set $M \subset \Omega$ and decreases in a set $D \subset \subset \Omega$
(see Theorem 3.1 in \cite{HLL}). 
Theorem \ref{thm_qrunge} can be used to construct localized
solutions, of the kind in \cite{HLL}, with an upper bound on how badly 
the localized solution can behave outside the set of localization.

In proving Theorem \ref{thm_qrunge} we adapt the arguments in \cite{RS} to the Maxwell case.
Let us mention some of the particularities in dealing with the problem in case of the time harmonic
Maxwell equations.
Note firstly that Theorem \ref{thm_qrunge} applies to elements in $\mathcal{S}_A$
that also have some additional integrability, i.e. elements that are also in $L^p(\Omega)^6,$ for some $ p>2$. 
By the qualitative  Runge argument in \cite{HLL} we know that all elements of $\mathcal{S}_A$
can be estimated by elements in $\mathcal{S}_\Omega$.
To what extent it is possible to obtain a quantitative approximation without additional
integrability of regularity assumptions on the element in $\mathcal{S}_A$ that is approximated,
remains open. Note also that in the scalar case in \cite{RS} (see Theorem 1.2),
the solutions are in contrast in $H^1(\Omega)$ by elliptic regularity, and this type of assumption
is thus automatically satisfied by the elements in the analogue of the set $\mathcal{S}_A$
studied in \cite{RS}.

Another feature 
in proving Theorem \ref{thm_qrunge} in the Maxwell setting, is that we need to use $L^p$-spaces, with $p\neq2$.
This stems from that the conditional stability result for the Cauchy problem (see Theorem \ref{thm_CauchyProb_1})
that we apply, applies to boundary data in $TB^{-1/p}_{D}(\p\Omega)$ with $p>2$.
For much of this we are however able to apply the $L^p$-results in \cite{KS} and \cite{M2}.
In addition we can use interpolation, when applying Theorem \ref{thm_CauchyProb_1}.

Note also that the fact that we use 
boundary data in $TH_D^{-1/2}(\p\Omega)$ results in a solution $(E,H) \in \Hcurl{\Omega}^2$.
The space $\Hcurl{\Omega}$ is not compactly embedded in $L^2(\Omega)^3$. The compactness is needed
in Lemma \ref{lem_SVD}, where we construct a singular value decomposition. Interior elliptic regularity
can be however used to show that $(E,H) \in H^1_{loc}(\Omega)^6$, when $\veps$ and $\mu$ are Lipschitz,
which can be used to obtain the compact embedding.

\bigskip
\noindent
In proving Theorem \ref{thm_qrunge} we need the conditional stability of the Cauchy problem
for \eqref{eq_MEsys}. This is a result of independent interest and is stated as Theorem \ref{thm_CauchyProb_1}.
From Theorem \ref{thm_CauchyProb_1} one derives a quantitative version
of the unique continuation principle, which is in turn the key estimate in deriving the  Runge 
approximation result of Theorem \ref{thm_qrunge}.

The Cauchy data problem that we  are concerned with is 
\begin{align} \label{eq_CauchyProb}
\left\{
\begin{aligned}
\mathcal{M}(E,H) &= 0, & & \textnormal{in} \,\Omega,\\
\nu \times E &= f, & & \textnormal{on} \, \Gamma, \\
\nu \times H &= g, & & \textnormal{on} \, \Gamma,
\end{aligned}
\right.
\end{align}
and where $\Gamma \subset \p \Omega$ an open set. The main stability result is stated below, 
and is proved in sub section \ref{sec_thmCauchy}.

\begin{restatable}{thm}{thmTwo} \label{thm_CauchyProb_1}
Suppose that $(E, H) \in L^p(\Omega)^6$, with $p>2$ solve the Cauchy problem
\eqref{eq_CauchyProb}, and  that
$$
\eta \geq \| f \|_{TB_{D}^{-1/p}(\Gamma)} +  \| g \|_{TB_{D}^{-1/p}(\Gamma)}, 
\quad  \zeta \geq \| E \|_{L^p(\Omega)^3} + \| H \|_{L^p(\Omega)^3},
$$
Then we have the estimate
\begin{align} \label{eq_CauchyEst}
\big\|(E,H) \big\|_{\Hcurl{\Omega}^2} 
\leq C (\zeta + \eta) \Bigg( \log \frac{\zeta + \eta}{ \eta} \Bigg)^{-m},
\end{align}
where $C,m > 0$ are positive constants. 
\end{restatable}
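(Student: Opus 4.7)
The plan is to reduce the first-order Maxwell system to a second-order elliptic problem, apply a Carleman-type propagation of smallness, and then optimize against the a priori $L^{p}$ bound to extract logarithmic stability.

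\textbf{Step 1 (Reduction to an elliptic second-order system).} Using the first Maxwell equation, $i\omega\mu H = \nabla \times E$, and substituting it into the second, one finds that $E$ satisfies
$$
\nabla \times \bigl(\mu^{-1}\nabla \times E\bigr) - \omega^{2}\veps E = 0,
$$
together with the constraint $\nabla \cdot (\veps E) = 0$. Adding a grad-div regularization $-\nabla\bigl(s\,\nabla\cdot(\veps E)\bigr)$ with a suitable $s$ makes the system genuinely elliptic with Lipschitz principal part. The Cauchy data on $\Gamma$ translates into Dirichlet-type data $\nu \times E = f$ and Neumann-type data $\nu \times (\mu^{-1}\nabla\times E) = i\omega g$, and the analogous reduction holds for $H$.

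\textbf{Step 2 (Propagation of smallness by three-ball inequalities).} For this elliptic system one has standard Carleman estimates, which yield three-sphere inequalities of the form
$$
\|u\|_{L^{2}(B_{r})} \leq C\|u\|_{L^{2}(B_{r'})}^{\theta}\|u\|_{L^{2}(B_{R})}^{1-\theta},\qquad \theta\in(0,1).
$$
Extending $(E,H)$ across $\Gamma$ by lifting $(f,g)$ into a slightly larger domain, the smallness on a thin collar of $\Gamma$ is controlled by $\eta$ via trace and extension estimates in the Besov spaces $TB_{D}^{-1/p}(\Gamma)$. Iterating the three-sphere inequality along a chain of balls from this collar to the interior gives a Hölder bound
$$
\|E\|_{L^{2}(K)}+\|H\|_{L^{2}(K)} \leq C\,\eta^{\alpha(K)}\zeta^{1-\alpha(K)}
$$
on every compact $K\subset\subset\Omega$, with $\alpha(K)\in(0,1)$ depending on the distance from $K$ to $\partial\Omega\setminus\Gamma$.

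\textbf{Step 3 (Optimization and upgrade to $\Hcurl{\Omega}$).} The exponent $\alpha(K)$ degenerates as $K$ approaches $\partial\Omega\setminus\Gamma$, and this degeneration is precisely what produces the logarithmic rate: choosing an exhaustion $K_{\delta}$ of $\Omega$ with $\mathrm{dist}(K_{\delta},\partial\Omega\setminus\Gamma)=\delta$, bounding the error outside $K_{\delta}$ by $\zeta$, and optimizing in $\delta$ yields an estimate of the type $C(\zeta+\eta)(\log((\zeta+\eta)/\eta))^{-m}$. Once $E,H$ are controlled in $L^{2}_{\mathrm{loc}}$, the equations themselves give pointwise control of $\nabla\times E$ and $\nabla\times H$ in $L^{2}_{\mathrm{loc}}$, and the $L^{p}$-regularity results of \cite{KS,M2} supply the boundary contribution needed to bound the full $\Hcurl{\Omega}$-norm.

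\textbf{Main obstacle.} The hardest part is Step~2: establishing a three-sphere inequality for the Maxwell system as a whole (not merely for scalar components) while keeping only Lipschitz regularity of $\veps,\mu$. The grad-div regularization makes the principal symbol a Laplacian, but the lower-order terms arising from derivatives of $\veps$ and $\mu$ must be absorbed by the Carleman weight, and this requires care in the choice of weight function. A secondary technical point is matching the negative Besov data $TB_{D}^{-1/p}(\Gamma)$ with $p>2$ to an $L^{2}$-based Carleman framework; as indicated in the discussion preceding the theorem, this is handled by interpolating the Besov data against the $L^{p}$ a priori bound to recover a quantity comparable to $\eta$ in the scalar product that appears in the Carleman identity.
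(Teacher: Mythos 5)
Your proposal follows the same overall route as the paper: reduce the first-order system to the second-order curl-curl operator $L_{\veps,\mu}$, obtain three-ball inequalities, lift the Cauchy data across $\Gamma$ to produce an inhomogeneous problem on an enlarged domain, propagate smallness by chaining balls, and then optimize against the a priori bound. One genuine (and acceptable) deviation is in Step 1--2: the paper does not need the grad-div regularization $-\nabla(s\,\nabla\cdot(\veps E))$ because it imports the three-ball inequality directly from Nguyen--Wang \cite{NW}, which is stated for the Maxwell curl-curl operator with Lipschitz anisotropic coefficients; your regularization route would work but requires re-deriving a Carleman estimate that the cited result already supplies.

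The real gap is in Step 3. Writing ``bounding the error outside $K_\delta$ by $\zeta$, and optimizing in $\delta$'' cannot produce a logarithmic rate: a $\zeta$-sized remainder that does not shrink with $\delta$ survives any optimization, and you would be left with $\|E\|\lesssim \eta^{\alpha}\zeta^{1-\alpha}+\zeta$. The paper's Proposition \ref{prop_globProp} makes essential use of the $L^p$ a priori bound with $p>2$ via H\"older,
\begin{align*}
\|E\|_{L^2(\Omega\setminus\Omega_r)}\;\leq\; |\Omega\setminus\Omega_r|^{\frac12-\frac1p}\,\|E\|_{L^p(\Omega\setminus\Omega_r)}\;\lesssim\; r^{\frac12-\frac1p}\,\zeta,
\end{align*}
so that the collar term actually decays as $r\to 0$; balancing this decaying term against the H\"older-continuation factor $r^{-n/2}\kappa^{C_2 r^{D}}$ is exactly what produces the $(\log)^{-m}$ rate. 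Your discussion attributes the need for $p>2$ only to matching the Besov trace data to an $L^2$ framework — that use does appear in the paper (e.g.\ $TB_D^{-1/p}(\Gamma)\subset TH^{-1/2}(\Gamma)$ in Lemma \ref{lem_Eesol} and the interpolation in the Runge section) — but the collar-shrinkage estimate is the load-bearing one for the stability rate, and it is missing from your Step 3 as written.
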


\noindent
The conditional stability of the Cauchy problem has been investigated extensively in inverse problems
and the study of ill-posed problems. 
The Cauchy problem for elliptic PDE, was in fact one of the first problems 
that suggested how to deal with ill-posedness. The solution $(E,H)$ of a PDE as \eqref{eq_CauchyProb},
does not in general depend continuously on $f$ and $g$, as was observed by Hadamard in \cite{Ha}.
The stability problem can in this sense be
ill-posed (for a more detailed discussion of this phenomena and Hadamard's example
see section 1.1. in \cite{ARRV}). One can nevertheless often prove so called conditional stability
results, which are of relevance in many situations.
A conditional stability result, such as Theorem \ref{thm_CauchyProb_1},
uses an apriori bound on the size of the solutions in the estimate (which in Theorem
\ref{thm_CauchyProb_1} is the assumption on $\zeta$).
For more background on the Cauchy problem in the context 
of inverse problems see \cite{ARRV} and \cite{Ch}. 

Theorem \ref{thm_CauchyProb_1} extends the type of global stability 
result for the Cauchy problem  found in \cite{ARRV} to the case of Maxwell's equations
(see Theorem 1.9 in \cite{ARRV}).
This type of problem has in the Maxwell case, been studied to some degree. Earlier results 
on the conditional stability of the Cauchy problem for Maxwell's equations
can be found in \cite{EY}. Note also  that the Cauchy problem has been studied
extensively (see e.g. \cite{ARRV} and \cite{C} and corresponding references).

The proof of Theorem \ref{thm_CauchyProb_1} follows the methods of \cite{ARRV} fairly closely.
The starting point are the three ball inequalities given in \cite{NW}. From this one derives
a local and global propagation of smallness results, which in turn yields the stability estimate.
Here we use the fact the Maxwell system in \eqref{eq_MEsys}, can be analyzed
using the second order system
$$
\nabla \times (\mu^{-1}\nabla \times E) - \omega^2\eps E = 0.
$$
The above equation is connected to a scalar second order elliptic equation. Note 
that for $\mu$ and $\eps$ that are constant scalars it reduces to three copies of the Helmholtz equation.
One difference that becomes important when proving Theorem \ref{thm_CauchyProb_1},
is that $TH_D^{-1/2}(\p\Omega)$ boundary data gives
solutions to equation \eqref{eq_MEsys} that are in $\Hcurl{\Omega}$, and we cannot use
Sobolev embedding as is done in \cite{ARRV}, when proving the global propagation of smallness
in Proposition \ref{prop_globProp}. 
We deal with this by considering boundary data in the space $TB^{-1/p}_{D}(\p\Omega)$, 
where $p>2$, (see section \ref{sec_fspaces} for the definition) and using the Hölder inequality instead.

Some further remarks on Theorem \ref{thm_CauchyProb_1} are that the methods here could also be used to prove
a version of the theorem with source terms, and that an interior stability estimate as 
Theorem 1.7 in \cite{ARRV} could also be proved similarly as Theorem \ref{thm_CauchyProb_1}.

\medskip
\noindent
This paper is organized as follows. We begin by proving Theorem \ref{thm_CauchyProb_1} in section
\ref{sec_Cauchy}. For this 
we need a three ball inequality that follows directly from the results in \cite{NW} which
are in section \ref{sec_3balls}. In subsections \ref{sec_intprop} and \ref{sec_globprop} we 
prove the interior and global propagation of smallness. In subsection \ref{sec_thmCauchy}
we give the proof of Theorem \ref{thm_CauchyProb_1}. The  next part of the paper containing the Runge 
approximation argument starts in section \ref{sec_Runge_approximation}. First we derive
a quantitative form of a unique continuation principle in section \ref{sec_qucp}. In the next 
subsection \ref{sec_qrunge} we prove the main Theorem \ref{thm_qrunge}. In the appendix in section 
\ref{sec_fspaces} we review definitions and facts relating to the functions spaces used
through out the paper.

\section{Stability of the Cauchy problem} \label{sec_Cauchy}

\noindent
In the following subsections we will prove Theorem \ref{thm_CauchyProb_1}.
For the whole of section \ref{sec_Cauchy} we will assume that $\Omega \subset \R^3$ is a bounded Lipschitz domain.
We will let $L_{\eps,\mu}$ denote the operator
$$
L_{\veps,\mu} E := \nabla \times (\mu^{-1}\nabla \times E) - \omega^2\eps E.
$$
Note that setting $H = \tfrac{1}{i\omega }\mu^{-1}\nabla \times E$ using \eqref{eq_MEsys},
yields the equation
$$
L_{\veps,\mu} E = 0, \quad \text{in } \Omega.
$$
It will be convenient at times to use this second order elliptic equation, in places of \eqref{eq_MEsys}.

\subsection{Three ball inequalities} \label{sec_3balls}

Our starting point is the three ball inequalities in \cite{NW}.
We assume that $M >0$ is such that
\begin{align} \label{eq_M}
\|\veps\|_{W^{1,\infty}(\Omega;\R^{3\times 3})}, \;
\|\mu\|_{W^{1,\infty}(\Omega;\R^{3\times 3})} \leq M.
\end{align}
Assume furthermore that $E \in L^2(\Omega)^3$ solves
\begin{align} \label{eq_Eeq}
L_{\veps,\mu} E
&= 0, \quad \textnormal{ in }\Omega,
\end{align}

\noindent
The following result is obtained in \cite{NW}, see Theorem 1.1.

\begin{prop} \label{prop_3balls}
There exists $\rho > 0$ and  $\tau \in (0,1)$ such that
for all $E$ that are  solutions to  \eqref{eq_Eeq}, and
$r_1< r_2 < r_3/2 < \rho$, with $B(x,r_3) \subset \Omega$,
we have the estimate 
\begin{align}
\|E\|_{\Hcurl{B_2}}
\leq C
\|E\|^{\tau}_{\Hcurl{B_1}}
\|E\|^{1-\tau}_{\Hcurl{B_3}},
\end{align}
where $C$ depends on $M,\tau,r_2,r_3$ and $c$ from \eqref{eq_ellip}, and $B_j:=B(x,r_j)$.
\end{prop}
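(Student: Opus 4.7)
The plan is to prove the three-ball inequality via a Carleman-estimate argument after first reducing the vector equation $L_{\veps,\mu}E=0$ to an elliptic system with decoupled principal part. Taking the divergence of $L_{\veps,\mu}E=0$ and using that $\nabla\cdot\nabla\times\equiv 0$ automatically yields the constraint $\nabla\cdot(\veps E)=0$; since $\veps$ is Lipschitz and elliptic, this rewrites as $\nabla\cdot E=b(x)\cdot E$ for an $L^\infty$ vector field $b$. Substituting this into the identity $\nabla\times(\mu^{-1}\nabla\times E)=-\mu^{-1}\Delta E+(\text{first-order in }E)$ converts $L_{\veps,\mu}E=0$ into a coupled elliptic system of the form
$$
-\Delta E+A(x)\nabla E+B(x)E=0,
$$
whose principal part is the componentwise Laplacian with Lipschitz perturbations of order one and zero. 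This is exactly the shape to which a scalar Carleman estimate applies.

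Next I would use a pointwise Carleman estimate with a radial weight $e^{\tau\varphi(|x-x_0|)}$ for a convex profile $\varphi$: for smooth $u$ of compact support in $B(x_0,\rho)$ this gives, for $\tau\ge\tau_0$,
$$
\tau^3\!\int e^{2\tau\varphi}|u|^2+\tau\!\int e^{2\tau\varphi}|\nabla u|^2\le C\!\int e^{2\tau\varphi}|\Delta u|^2.
$$
Applied to $u=\chi E$ with a cutoff $\chi\equiv 1$ on $B_2$ and $\supp\chi\subset B_3$, the commutator $[\Delta,\chi]E$ is supported in the annulus $B_3\setminus B_2$, and the lower-order terms $A\nabla E+BE$ are absorbed by $\tau$ large enough. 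Extracting the extremal values of $\varphi$ on the two shells produces
$$
\|E\|_{H^1(B_2)}\le Ce^{-c_1\tau}\|E\|_{H^1(B_3)}+Ce^{c_2\tau}\|E\|_{H^1(B_1)},
$$
and optimizing $\tau$ to balance the two exponentials yields the interpolation inequality with some $\tau\in(0,1)$ depending only on $r_1,r_2,r_3$. The geometric condition $r_1<r_2<r_3/2<\rho$ is what makes $\varphi$ admissible on the relevant annuli, since it ensures the level sets separate enough for the weight to strictly dominate across $B_3\setminus B_2$ while remaining controlled on $B_1$.

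Finally I would upgrade the $H^1$ form to the $\Hcurl{\cdot}$ form in the statement. The right-hand side is free from $\|\cdot\|_{L^2}\le\|\cdot\|_{\Hcurl{\cdot}}$. For the left, a Caccioppoli-type interior bound applied to the reduced elliptic system gives $\|\nabla E\|_{L^2(B_2)}\lesssim\|E\|_{L^2(B_2')}$ on any slightly enlarged ball $B_2'\subset B_3$, so in particular $\|\nabla\times E\|_{L^2(B_2)}$ is dominated; shrinking radii by a harmless factor produces the stated inequality. The main obstacle, in my view, is the reduction step: for matrix-valued Lipschitz $\veps,\mu$ one must track carefully how the constraint $\nabla\cdot(\veps E)=0$ interacts with $\nabla\times(\mu^{-1}\nabla\times\cdot)$ to guarantee that the principal part is genuinely a (componentwise) second-order elliptic operator with merely Lipschitz first-order perturbations. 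Achieving this quantitatively — and identifying the threshold $\rho$ at which the corresponding weight $\varphi$ becomes admissible — is precisely the technical content of the Nguyen--Wang result being cited.
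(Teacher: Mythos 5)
The paper does not prove this proposition; it simply cites Theorem~1.1 of \cite{NW}, so there is no internal argument to compare yours against. Your Carleman outline is the right general shape for a three-ball inequality, but the reduction step on which everything hinges has a genuine gap. The constraint $\nabla\cdot(\veps E)=0$ does not rewrite as $\nabla\cdot E = b(x)\cdot E$ when $\veps$ is a matrix: expanding gives $\sum_{i,j}\veps_{ij}\,\partial_i E_j + \sum_{i,j}(\partial_i\veps_{ij})E_j = 0$, and the first sum is a general linear combination of all nine partials $\partial_i E_j$, not a multiple of $\nabla\cdot E$, unless $\veps$ is a scalar multiple of the identity. Likewise the identity $\nabla\times(\mu^{-1}\nabla\times E) = -\mu^{-1}\Delta E + (\text{first order})$ holds only for scalar $\mu$; for matrix $\mu$ the principal symbol of $\nabla\times(\mu^{-1}\nabla\times\cdot)$ is a genuinely anisotropic, matrix-valued quadratic form, and no lower-order correction will turn it into the componentwise Laplacian.

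Consequently the scalar Carleman estimate you invoke (weight against $\Delta$) does not apply to the system you actually have; what your sketch proves is the isotropic case, whereas the proposition is needed for Lipschitz symmetric matrix fields $\veps,\mu \in W^{1,\infty}(\Omega;\R^{3\times3})$. You flag the reduction as ``precisely the technical content of the Nguyen--Wang result,'' but that framing understates the problem: it is not a matter of tracking perturbation terms more carefully on top of your argument, it is a different reduction. Nguyen--Wang exploit the full first-order system for the pair $(E,H)$ jointly and perform a system-level algebraic manipulation whose principal part, while not the Laplacian, still admits an appropriate Carleman weight, and then verify pseudoconvexity; the threshold $\rho$ and exponent $\tau$ in the statement come out of that analysis. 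To close the gap one would need either a Carleman estimate for second-order elliptic systems with Lipschitz matrix-valued principal coefficients, or the Maxwell-specific system reduction of \cite{NW}; your outline supplies neither.
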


\noindent
For a solution $E \in L^2(\Omega)^3$ to the inhomogeneous equation
\begin{align} \label{eq_Eeq_inh}
L_{\veps,\mu} E &= F + \nabla \times \tilde F \;\in \Hscurl{-1}{\Omega}, \quad \textnormal{in} \,\Omega,
\end{align}
where $F,\tilde F \in L^2(\Omega)^3$ and 
$$
\|F\|_{L^2(\Omega)^3} + \|\tilde F\|_{L^2(\Omega)^3}  \leq M_0,
$$
we have the following result. Note that the formulation of the estimate 
of the corollary, is such
that it is in a convenient for later use.

\begin{cor}  \label{cor_3balls_inh}
There exists $\rho > 0$  and $\tau \in (0,1)$ such that
for all $E$ that are  solutions to  \eqref{eq_Eeq_inh}, and
$r_1< r_2 < r_3/2 < \rho$ with $B(x,r_3) \subset \Omega$, we have the estimate 
\begin{align*}
\big(\|E\|_{\Hcurl{B_2}} &+ M_0\big) 
\leq C
\big(\|E\|_{\Hcurl{B_1}} + M_0\big)^{\tau}
\big(\|E\|_{\Hcurl{B_3}} + M_0\big)^{1-\tau},
\end{align*}
where $C$ depends on $M,\tau,r_2,r_3$ and $c$ from \eqref{eq_ellip}.
\end{cor}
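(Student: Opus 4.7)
The strategy is to reduce the inhomogeneous three ball inequality to the homogeneous case of Proposition~\ref{prop_3balls} by subtracting off a particular solution of the source term with quantitative control by $M_0$. Concretely, I would construct an auxiliary field $\tilde E \in \Hcurl{B_3}$ satisfying
\[
L_{\veps,\mu}\tilde E \;=\; F + \Curl \tilde F \quad \text{in } B_3,\qquad \|\tilde E\|_{\Hcurl{B_3}} \leq C M_0,
\]
with $C$ depending only on $M$, $c$, $\omega$, and $r_3$. Granted this, the difference $E-\tilde E$ solves the homogeneous equation \eqref{eq_Eeq} on $B_3$, so Proposition~\ref{prop_3balls} applies to it directly.

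To construct $\tilde E$ I would solve the boundary value problem on a slightly enlarged ball $B' \supset B_3$, say of radius $r'=\min(\rho,\,3r_3/2)$, with boundary condition $\nu\times\tilde E=0$ on $\p B'$. For $\rho$ chosen sufficiently small (shrinking it if needed), $\omega^2$ lies in the resolvent set of the associated Maxwell operator on $B'$: the relevant Weyl-type lower bound on the smallest eigenvalue scales like $1/r'^2$ and depends only on the ellipticity constants from \eqref{eq_ellip} and \eqref{eq_M}, not on the center of the ball. This yields well-posedness with the estimate $\|\tilde E\|_{\Hcurl{B'}} \leq C(\|F\|_{L^2} + \|\tilde F\|_{L^2}) \leq C M_0$, uniformly in the location of $B_3$, and restriction to $B_3$ preserves the bound.

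The remainder is bookkeeping. Applying Proposition~\ref{prop_3balls} to $E-\tilde E$ gives
\[
\|E-\tilde E\|_{\Hcurl{B_2}} \leq C\|E-\tilde E\|_{\Hcurl{B_1}}^\tau \|E-\tilde E\|_{\Hcurl{B_3}}^{1-\tau},
\]
and the triangle inequality controls each $\|E-\tilde E\|_{\Hcurl{B_j}}$ by $\|E\|_{\Hcurl{B_j}} + CM_0$. On the left, $\|E\|_{\Hcurl{B_2}} \leq \|E-\tilde E\|_{\Hcurl{B_2}} + CM_0$, and adding $M_0$ to both sides absorbs the additive $CM_0$ into the right-hand side (both factors there already contain an additive $M_0$ and hence dominate it). Since $\|E\|_{\Hcurl{B_j}} + CM_0 \leq C(\|E\|_{\Hcurl{B_j}}+M_0)$, the desired inequality follows after relabeling the constant.

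The main obstacle is the \emph{uniform} well-posedness of the auxiliary BVP in the center of the ball: one must ensure $\omega^2$ is not an eigenvalue and that the resolvent bound is independent of the base point. Shrinking $\rho$ as above resolves this. Alternatively, one may bypass the BVP entirely by revisiting the Carleman estimate underlying the proof of Proposition~\ref{prop_3balls} in \cite{NW}: that estimate carries a source term on the right, so repeating the derivation of the three ball inequality with the $F+\Curl\tilde F$ contribution absorbed into that source yields the corollary directly, with $M_0$ playing its assigned role.
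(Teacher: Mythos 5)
Your overall strategy — subtract a particular solution bounded by a multiple of $M_0$ and apply Proposition~\ref{prop_3balls} to the homogeneous difference — is the same as the paper's, but the construction of that particular solution differs. The paper takes $U$ solving the second-order Maxwell BVP with $\nu\times U = 0$ on all of $\p\Omega$, invoking the standing well-posedness assumption on $\omega$ and Theorem~2.1 in~\cite{HLL} for the bound $\|U\|_{\Hcurl{\Omega}}\leq CM_0$; you instead solve on a slightly enlarged ball $B'$, arguing well-posedness via eigenvalue scaling in $r'$. That route also works and has the merit of being local (it does not lean on the global assumption on $\omega$), but the spectral justification as written needs one more step: the operator $\nabla\times(\mu^{-1}\nabla\times\,\cdot\,)$ with $\nu\times U|_{\p B'}=0$ has $\nabla H_0^1(B')$ in its kernel, so ``the smallest eigenvalue'' is $0$, not of order $1/r'^2$; only the smallest \emph{nonzero} Maxwell eigenvalue obeys that Weyl-type scaling. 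To conclude invertibility of $L_{\veps,\mu}$ on $B'$ one should Hodge-decompose $U$ into a gradient part plus a complementary part, invert the gradient block trivially (using $\omega\neq 0$), and use the eigenvalue gap on the complement. With that fix your construction goes through, and the bookkeeping you do at the end is correct and essentially identical to the paper's. Your alternative — pushing $F+\Curl\tilde F$ through the Carleman estimate of~\cite{NW} directly — is also sound and is in fact a standard way to obtain inhomogeneous three-ball inequalities.
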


\begin{proof}
Let $E$ be a solution to \eqref{eq_Eeq_inh} and
consider the solution $U \in \Hcurl{\Omega}$ to the boundary value problem
\begin{align*} 
\left\{
\begin{aligned}
\nabla \times \big( \mu^{-1} \nabla \times U \big) 
- \omega^2 \veps U &= F + \nabla \times \tilde F , & & \textnormal{in} \,\Omega, \\
\nu \times U &= 0, & & \textnormal{on} \, \partial \Omega.
\end{aligned}
\right.
\end{align*}
The function $E-U$ solves the homogeneous equation \eqref{eq_Eeq}.
Using the inequality of Proposition \ref{prop_3balls} and 
the estimates for the solution $U$ in terms of the source terms
(see  Theorem 2.1 in \cite{HLL}), we have that 
\begin{align*}
\|E-U\|_{\Hcurl{B_2}}
&\leq C
\|E-U\|^{\tau}_{\Hcurl{B_1}}
\|E-U\|^{1-\tau}_{\Hcurl{B_3}} \\
&\leq C
(\|E\|_{\Hcurl{B_1}} + M_0)^{\tau}
(\|E\|_{\Hcurl{B_3}} + M_0)^{1-\tau}.
\end{align*}
On the other hand we have that
$$
\|E\|_{\Hcurl{B_2}} \leq
\|E-U\|_{\Hcurl{B_2}} + M_0.
$$
By using the trivial estimate that
$$
M_0\leq 
(\|E\|_{\Hcurl{B_1}} + M_0)^{\tau}
(\|E\|_{\Hcurl{B_3}} + M_0)^{1-\tau},
$$
it is straight forward to see that the above estimates 
imply the claim.

\end{proof}

\subsection{Interior propagation of smallness} \label{sec_intprop}

\noindent The three ball inequalities of the previous section 
can be extended to  give an estimate of the norm on an open set $G \subset \subset \Omega$.
We will consider a solution $E \in \Hcurl{\Omega}$ to the equation
\begin{align} \label{eq_intEeq}
L_{\veps,\mu} E &= F + \nabla \times \tilde F \in \Hscurl{-1}{\Omega}, \quad \textnormal{in } \Omega.
\end{align}
The procedure of extending the three ball inequalities 
to obtain an estimate for $G$, is called propagation of smallness.
We have the result below, which we prove following the methods of 
of Proposition 5.1 in \cite{ARRV}.
\begin{prop} \label{eq_propIntProp}
Let $B(x_0,r_0) \subset \Omega$, and assume that
$G \subset \Omega$ is an open and connected set such that 
$$
\operatorname{dist} ( G, \p \Omega) > h, \quad B(x_0, \tfrac{r_0}{2}) \subset G.
$$
where $h \leq \{ 2 \rho, r_0/2 \}$, and $\rho$ is as in Corollary \ref{cor_3balls_inh}.
Assume furthermore that $E$ solves \eqref{eq_intEeq} and that $\eta$ and $\zeta$ are such that
$$
\| E \|_{\Hcurl{B(x_0,r_0)}} \leq \eta, \quad \|E\|_{\Hcurl{\Omega}} \leq \zeta,
$$
then we have that
\begin{align} \label{eq_intProp}
\|E\|_{\Hcurl{G}} \leq C
\big(\eta + M_0 \big)^{\delta}
\big(\zeta + M_0\big)^{1-\delta},
\end{align}
where the constants are such that $\delta \in (0,1)$, and $C$ depends on $h$.
\end{prop}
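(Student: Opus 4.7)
The plan is to iterate the three-ball inequality of Corollary \ref{cor_3balls_inh} along a chain of balls connecting $B(x_0, r_0/2)$ to an arbitrary point of $G$, following the standard propagation-of-smallness scheme (Proposition 5.1 in \cite{ARRV}). Fix a radius $r > 0$ with $r < \min\{h/4, r_0/2, \rho/3\}$, so that for every $x$ with $\operatorname{dist}(x, \p\Omega) > h$ the triple $B(x,r), B(x,2r), B(x,3r)$ lies in $\Omega$ and the three-ball inequality applies at $x$ with radii $r_1=r$, $r_2=2r$, $r_3=3r$.

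Given $y \in G$, since $G$ is open and connected it is path-connected, and I would connect $x_0$ to $y$ by a polygonal path $\gamma \subset G$. Select centers $x_0, x_1, \ldots, x_N = y$ along $\gamma$ with $|x_{i+1}-x_i| \leq r$; this yields the crucial inclusion $B(x_{i+1}, r) \subset B(x_i, 2r)$. Writing $a_i := \|E\|_{\Hcurl{B(x_i, r)}} + M_0$ and $Z := \zeta + M_0$, Corollary \ref{cor_3balls_inh} applied at each $x_i$ gives
\[
a_{i+1} \leq \|E\|_{\Hcurl{B(x_i,2r)}} + M_0 \leq C\, a_i^{\tau}\, Z^{1-\tau}.
\]
Taking logarithms and iterating $N$ times produces $a_N \leq C^{1/(1-\tau)} a_0^{\tau^N} Z^{1-\tau^N}$, and $a_0 \leq \eta + M_0$ since $B(x_0,r) \subset B(x_0, r_0)$.

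To conclude I would cover $\ov{G}$ by finitely many balls $B(y_k, r)$, $k=1,\ldots,K$, apply the above at each $y_k$, and sum. The decisive geometric fact is that the chain length $N$ can be bounded uniformly in $y \in G$: because $\operatorname{dist}(G,\p\Omega) > h$ and $\Omega$ is bounded, $G$ is relatively compact in $\Omega$, so there exists a slightly thickened open connected set $G' \supset G$ with $\operatorname{dist}(G', \p\Omega) > h/2$ whose diameter (along paths in $G'$) is bounded; one may then arrange $N \leq N_0$ and $K \leq K_0$ for geometric constants $N_0, K_0$. Setting $\delta := \tau^{N_0}$ and combining the $K_0$ pointwise estimates yields \eqref{eq_intProp}.

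The main obstacle I expect is the uniform chain-length bound: one must verify that every point of $G$ can be joined to $x_0$ by a path staying in the region where Corollary \ref{cor_3balls_inh} is available (i.e., at distance at least $3r$ from $\p\Omega$), with uniformly controlled number of hops. This is straightforward when $G$ is assumed connected and compactly contained in $\Omega$, but requires a short argument using the $h$-neighborhood condition; everything else is a clean iteration of the three-ball inequality and a finite sub-cover of $\ov{G}$.
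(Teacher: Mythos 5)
Your approach is the same as the paper's in its broad strokes (chain of balls along a path, iterate the three-ball inequality of Corollary \ref{cor_3balls_inh}, then cover $G$), and you have correctly identified the sensitive point yourself: the uniform bound on the chain length $N$. But the way you propose to deal with it does not actually work, and you leave the real argument out.

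Two concrete problems. First, your chain construction — centers along a polygonal path with $|x_{i+1}-x_i| \leq r$ — gives the needed inclusion $B(x_{i+1},r) \subset B(x_i,2r)$, but it puts no upper bound on $N$: a polygonal path from $x_0$ to $y$ inside $G$ can have arbitrarily large length (think of a thin spiraling region with $\operatorname{dist}(G,\p\Omega) > h$), so the number of hops and hence the exponent $\delta = \tau^N$ degenerate. Second, your proposed remedy — that "$G$ is relatively compact in $\Omega$, so there exists a slightly thickened $G'$ whose path diameter is bounded, one may then arrange $N \leq N_0$" — is exactly the claim that needs proof and is not a consequence of relative compactness alone; moreover, the estimate of Proposition \ref{eq_propIntProp} is used later (in Proposition \ref{prop_globProp}) applied to the sets $\Omega_r$ with $r\to 0$, so the bound on $N$ must depend only on $h$ and $|\Omega|$, not on the particular set $G$.

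The missing idea is a specific choice of the chain points that forces the small balls to be pairwise disjoint. In the paper this is done by setting $t_{k+1} := \max\{\,t : |\gamma(t)-x_k| = 2r_1\,\}$, so that $|x_{k+1}-x_k| = 2r_1$ exactly and, by the maximality of $t_{k+1}$, all later $x_j$ with $j > k$ lie at distance $> 2r_1$ from $x_k$. Then the balls $B(x_k,r_1)$ are pairwise disjoint and contained in $\Omega$, which gives $N \leq |\Omega| / (c_n r_1^n) \leq C_2 |\Omega| / h^n$, a bound independent of $y$ and of $G$. With the paper's choice $r_2 = 3r_1$, the spacing $|x_{k+1}-x_k| = 2r_1$ still yields $B(x_{k+1},r_1) \subset B(x_k,r_2)$, so the three-ball iteration goes through. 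This volume-counting trick is what lets you set $\delta := \tau^{C_2|\Omega|/h^n}$ uniformly; without it your proof is incomplete.
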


\begin{proof}
We begin by fixing three radii $r_1 < r_2 < r_3$, as
$$
r_3 := \tfrac{h}{2}, \; r_2 := \tfrac{r_3}{3}, \; r_1 := \tfrac{r_2}{3}.
$$
The strategy is to cover $G$ with balls on which we can use the earlier 
three ball inequality of Corollary \ref{cor_3balls_inh}.
To this end we consider the larger set $G^{r_1} \supset G$ given by
$$
G^{r_1} := \{ x \in \Omega \; : \; \operatorname{dist}(x,G) < r_1 \}.
$$
Let $y \in G^{r_1}$ and $\gamma : [0,1] \to G^{r_1}$ be a path such that
$
\gamma(0) = x_0 $ and $ \quad \gamma(1) = y.
$
We define $0=t_0 < \dots < t_N = 1$ by
$$
t_{k+1} := \max \{ \,t \;:\; |\gamma(t) - x_k| = 2r_1 \}, \quad k = 2,..,N-2,
$$
where $x_k := \gamma(t_k)$. It is easy to check that
\begin{enumerate}
\item $B(x_k,r_1)$, $k=1,..,N$ are disjoint.

\item $B(x_{k+1},r_1) \subset B(x_k, r_2)$, for $k=1,..,N-1$.
\end{enumerate}
The second condition allows us to apply 
the estimate of Corollary \ref{cor_3balls_inh}, to obtain that
\begin{align*}
\|E\|_{\Hcurl{B(x_{k+1},r_1)}} + M_0
\leq C
\big(\|E\|_{\Hcurl{B(x_{k},r_1)}} + M_0\big)^{\tau}
\big(\zeta + M_0\big)^{1-\tau}.
\end{align*}
We write this as 
\begin{align*}
m_{k+1} := \frac{\|E\|_{\Hcurl{B(x_{k+1},r_1)}} + M_0}
{\zeta + M_0}
\leq 
C\Big( \frac{\|E\|_{\Hcurl{B(x_k,r_1)}} + M_0}
{\zeta + M_0}
\Big)^\tau.
\end{align*}
We thus have that $m_{k+1} \leq C m_k^\tau$. This implies that
$$
m_N \leq C^{1+\tau+\tau^2+...+\tau^N}m^{\tau^N}_0.
$$
Using this and the fact that $r_0 > r_1$,
we obtain an estimate in the vicinity of the point $y$
of the form that we are seeking, i.e.
\begin{align} \label{eq_estAty}
\|E\|_{\Hcurl{B(y,r_1)}} 
\leq C'
\big(\|E\|_{\Hcurl{B(x_0,r_0)}} + M_0\big)^{\delta}
\big(\zeta + M_0\big)^{1-\delta},
\end{align}
where we set 
$$
C' := C^{\frac{1}{1-\tau}} \geq  C^{1+\tau+\tau^2+...+\tau^N}
\quad \text{and} \quad
\delta := \tau^{ C_2|\Omega|/h^n} \geq \tau^N.
$$
Notice that the last estimate is possible for some $C_2$ independent of $N$,
since we chose the balls $B(x_k,r_1)$ to be disjoint,
and therefore we can estimate $N$, by
$$
N \leq |\Omega| / (r^n_1 C) \leq C_2|\Omega|/h^n.
$$
The above choice of $C'$ and $\delta$ are thus such that \eqref{eq_estAty} 
does not depend on $N$.

The last step is to cover $G$ with appropriate sets and then
use \eqref{eq_estAty} to obtain an estimate pertaining to $G$.
To this end we tessellate
$\R^n$ with cubes $K_j$, with side length $l=2r_1 / \sqrt{n}$. Let
$
\{ K_j \;:\; j\in J \},
$
be the ones that intersect $G^{r_1}$. Note that $l$ is such that for every
$K_j$, $j \in J$ there is $y_j \in G^{r_1}$, such that $K_j \subset B(y_j, r_1)$.
For $J$ we have the estimate
$$
J \leq \frac{|\Omega|}{|K_j|} = \frac{|\Omega|}{2^n r_1^n} n^{n/2}.
$$
We can now use \eqref{eq_estAty} to obtain the desired estimate. We have that
\begin{align*}
\| E \|^2_{\Hcurl{G}} &\leq \sum_{j \in J} \| E \|^2_{\Hcurl{K_j}}
\leq \sum_{j \in J} \|  E \|^{2}_{\Hcurl{ B(y_j, r_1) }} \\
&\leq
J (C')^2  (\| E\|_{\Hcurl{B(x_0,r_1)}} + M_0)^{2\delta} (\zeta + M_0)^{2-2\delta},
\end{align*}
which proves the claim.
\end{proof}

\subsection{Global propagation of smallness} \label{sec_globprop}

\noindent
In this section we derive a global version of the estimate in \ref{eq_propIntProp} 
in the previous section.
Our aim is more specifically to obtain an estimate where the set $G$ in Proposition
\ref{eq_propIntProp} is replaced by $\Omega$. We now assume 
that $E \in \Hspcurl{0}{p}{\Omega}$, for some $p>2$, and solves the equation
\begin{align} \label{eq_intEeq2}
L_{\veps,\mu} E &= F + \nabla \times \tilde F \in \Hscurl{-1}{\Omega}, \quad \textnormal{in } \Omega,
\end{align}
where $F,\tilde F \in L^2(\Omega)^3$.
We have the following proposition.

\begin{prop} \label{prop_globProp}
Assume that $E \in \Hspcurl{0}{p}{\Omega}$, for some $p>2$, solves \eqref{eq_intEeq2}, and let $B(x_0,r_0) \subset \Omega$.
Assume further  $\eta$ and $\zeta$ are, such that
\begin{align*}
\| E \|_{\Hcurl{B(x_0,r_0)}} \leq \eta, \quad
\| E \|_{\Hspcurl{0}{p}{\Omega}} \leq \zeta, 
\end{align*}
There exist $C>0$ and $m>0$ such that
\begin{align*}
\|E\|_{\Hcurl{\Omega}} \leq (\zeta + M_0) \omega\Big( \frac{\eta + M_0}{\zeta + M_0} \Big),
\end{align*}
where
$
\omega(t) := C \big( \log ( 1/t) \big)^{-m},\; t \in (0,1), 
$
and $m >0$.
\end{prop}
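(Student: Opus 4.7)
The plan is to split the estimate on $\Omega$ into an interior contribution from $G_h := \{x \in \Omega : \operatorname{dist}(x, \partial \Omega) > h\}$ and a contribution from the boundary strip $V_h := \Omega \setminus G_h$, using the interior propagation of smallness (Proposition~\ref{eq_propIntProp}) on $G_h$ and H\"older's inequality on $V_h$, and then to optimize $h$ as a function of the ratio $t := (\eta + M_0)/(\zeta + M_0)$. Since $\Hcurl{\Omega}$ admits no Sobolev embedding better than $L^2$, the hypothesis $E \in \Hspcurl{0}{p}{\Omega}$ with $p > 2$ is essential for the strip estimate; this is the Maxwell-specific twist flagged in the introduction.

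Before applying Proposition~\ref{eq_propIntProp}, I note that since $\Omega$ is bounded and $p > 2$, H\"older's inequality yields the a priori bound $\|E\|_{\Hcurl{\Omega}} \leq C |\Omega|^{(p-2)/(2p)} \zeta$, so Proposition~\ref{eq_propIntProp} applies with $\zeta$ playing the role of its $\Hcurl{\Omega}$-bound up to an absorbed multiplicative constant. For $h$ small enough that $B(x_0, r_0/2) \subset G_h$ and $h \leq \min(2\rho, r_0/2)$, this gives
\[
\|E\|_{\Hcurl{G_h}} \leq C (\eta + M_0)^{\delta(h)} (\zeta + M_0)^{1 - \delta(h)},
\]
where, reading off the proof of Proposition~\ref{eq_propIntProp}, $\delta(h) = \tau^{C_1/h^3}$ for some $\tau \in (0, 1)$ and $C_1 > 0$.

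For the strip, the Lipschitz regularity of $\partial \Omega$ gives $|V_h| \leq C_\Omega h$, and H\"older's inequality with exponent $p/2$ applied to $E$ and $\curl E$ separately yields
\[
\|E\|_{\Hcurl{V_h}} \leq C h^{\alpha} \zeta, \qquad \alpha := \tfrac{p-2}{2p}.
\]
Writing $M := \zeta + M_0$ and assuming $t \in (0, 1)$ (otherwise the estimate is trivial), these two bounds combine through $\|E\|_{\Hcurl{\Omega}}^2 = \|E\|_{\Hcurl{G_h}}^2 + \|E\|_{\Hcurl{V_h}}^2$ to give
\[
\|E\|_{\Hcurl{\Omega}} \leq C M \bigl( t^{\delta(h)} + h^{\alpha} \bigr).
\]

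The remaining step, which I expect to be the main obstacle, is to optimize $h = h(t)$ so that the bound takes the claimed log-type form $CM(\log(1/t))^{-m}$. The difficulty is that $\delta(h) = \tau^{C_1/h^3}$ depends doubly-exponentially on $1/h$, while the strip term $h^\alpha$ is only polynomial; $h$ must be chosen small enough that $h^\alpha$ is small, yet large enough that $\delta(h)\log(1/t)$ is sizeable so that $t^{\delta(h)} = \exp(-\delta(h)\log(1/t))$ is also small. Extracting a genuine $(\log(1/t))^{-m}$ modulus (rather than a weaker iterated-logarithm one) from this balance is the delicate technical point; once it is carried out, setting $\omega(t) := C(\log(1/t))^{-m}$ with the resulting $m > 0$ yields the stated estimate.
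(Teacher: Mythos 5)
You have the right decomposition (interior set plus boundary strip, H\"older on the strip using $p>2$), and you have correctly put your finger on where the difficulty lies. But the difficulty you flag is not merely a ``delicate technical point'' that can be pushed through — with the exponent $\delta(h) = \tau^{C_1/h^3}$ obtained by applying Proposition~\ref{eq_propIntProp} directly with $h$ as a free parameter, the optimization \emph{cannot} produce a $(\log(1/t))^{-m}$ modulus. Indeed, to make $t^{\delta(h)} = \exp(-\delta(h)\log(1/t))$ small one needs $\delta(h)\log(1/t)\gg 1$, i.e.\ $\tau^{C_1/h^3}\gg (\log(1/t))^{-1}$, which forces $h \gtrsim (\log\log(1/t))^{-1/3}$; the strip term then contributes only an iterated-logarithm modulus $(\log\log(1/t))^{-\alpha/3}$, strictly weaker than claimed. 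So the approach as written genuinely fails, not just technically stalls.

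The paper circumvents this by \emph{not} applying Proposition~\ref{eq_propIntProp} with a shrinking $h$. Instead it invokes the first part of the proof of Theorem 5.3 in \cite{ARRV} (their formula (5.38)), which yields for $\Omega_r := \{x \in \Omega : \operatorname{dist}(x,\p\Omega) > r\}$ the sharper estimate
\begin{align*}
\|E\|_{\Hcurl{\Omega_r}} \leq C_1 (\zeta + M_0)\, r^{-n/2} \left( \frac{\eta + M_0}{\zeta + M_0} \right)^{C_2 r^D},
\end{align*}
in which the exponent is \emph{polynomial} in $r$ rather than doubly exponential. The reason this is available is geometric: near a Lipschitz boundary one can run the chain-of-balls argument with balls whose radii shrink geometrically toward $\p\Omega$, so that the chain needed to reach a point at distance $\sim r$ from the boundary has length $O(\log(1/r))$ rather than $O(1/r^n)$, producing an exponent $\tau^{O(\log(1/r))} = r^{O(1)}$. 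This boundary refinement is precisely the content of ARRV's argument, and the paper notes it transfers verbatim once one has the three-ball inequality and the interior estimate. With the polynomial exponent $C_2 r^D$ in hand, the optimization is elementary: writing $\tau = r^D$, $\kappa = ((\eta+M_0)/(\zeta+M_0))^{C_2}$, $\sigma = n/(2D)$, $\theta = (1/2-1/p)/D$, one bounds $\tau^{-\sigma}\kappa^\tau$ via $e^{-s} < 1/s$ and chooses $\tau = (\log(1/\kappa))^{-1/(1+\theta+\sigma)}$, yielding the logarithmic modulus with $m = \theta/(1+\theta+\sigma)$. So the missing ingredient is the ARRV boundary propagation of smallness with its polynomially decaying exponent; without it, your strip-plus-interior splitting lands one logarithm short.
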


\begin{proof} We begin by defining the subset $\Omega_r \subset \Omega$, as
$$
\Omega_r := \{ x \in \Omega \;:\; \operatorname{dist}(x, \p \Omega) > r \}.
$$
By following the same line of reasoning as in the first part of the proof of 
Theorem 5.3 in \cite{ARRV}, one can shown that
for small enough $r>0$, we have the following estimate
\begin{align} \label{eq_Omegar}
\| E \|_{\Hcurl{\Omega_r}} \leq C_1 (\zeta + M_0) r^{-n/2}
\Big(  \frac{\eta + M_0}{\zeta + M_0} \Big)^{C_2 r^D}.
\end{align}
See formula (5.38) in \cite{ARRV}. The above estimate resembles \eqref{eq_intProp},
note however that in the above estimate there is an explicit dependence on $r>0$.
The proof of \eqref{eq_Omegar} in \cite{ARRV} is based on 
the estimate corresponding to \eqref{eq_intProp} 
and the argument depends otherwise only on the geometry of the domain $\Omega$.
This proof
works thus in the same way in the Maxwell case, once one has the estimate \eqref{eq_intProp}.
For the details see the proof of Theorem 5.3 in \cite{ARRV}. 

\medskip
To obtain an estimate that applies to $\Omega$ we need an estimate that applies to
$\Omega \setminus \Omega_r$. 
By the H\"older inequality we have for the  $p>2$ in the statement of the claim, that
\begin{align*}
\| E \|_{L^2(\Omega \setminus \Omega_r)} 
&\leq |\Omega \setminus \Omega_r|^{\frac{1}{2}-\frac{1}{p}} 
\| E \|_{L^p(\Omega \setminus \Omega_r)} \\
&\leq C r^{\frac{1}{2}-\frac{1}{p}} 
\zeta.
\end{align*}
Similarly using the assumption on $\zeta$ we have that
\begin{align*}
\| \nabla \times E \|_{L^2(\Omega \setminus \Omega_r)} 
&\leq
C r^{\frac{1}{2}-\frac{1}{p}} \zeta.
\end{align*}
Combining these two estimates with \eqref{eq_Omegar} gives that
\begin{align*}
\| E \|_{\Hcurl{\Omega}} \leq C_1 (\zeta + M_0) \Big[ r^{-n/2}
\Big(  \frac{\eta + M_0}{\zeta + M_0} \Big)^{C_2 r^D}
+
r^{\frac{1}{2}-\frac{1}{p}} \Big].
\end{align*}
Let us simplify the expression by setting 
$$
\tau := r^D, \; \theta := \frac{1}{D}\Big(\frac{1}{2}-\frac{1}{p} \Big), 
\, \kappa := \Big( \frac{\eta + M_0}{ \zeta + M_0} \Big)^{C_2},
\sigma := \frac{n}{2D}.
$$
The above estimate is then
\begin{align*}
\| E \|_{\Hcurl{\Omega}} \leq C  (\zeta + M_0)( \tau^{-\sigma}\kappa^\tau + \tau^\theta).
\end{align*}
Consider the expression
\begin{align*}
\tau^{-\sigma}\kappa^\tau + \tau^\theta 
=
\tau^{-\sigma}e^{\tau \log ( \kappa)} + \tau^\theta. 
\end{align*}
Notice firstly that $\kappa \in [0,1]$. We eliminate the exponential, by using the estimate
$e^{-s} < 1/s$, when $s>0$, i.e.
\begin{align*}
\tau^{-\sigma}\kappa^\tau + \tau^\theta 
\leq
\tau^{-\sigma}\frac{1}{\tau \log ( 1/\kappa)} + \tau^\theta. 
\end{align*}
To make the terms on the right hand side equal we choose
$$
\tau := \Big( \frac{1}{ \log (1 / \kappa)} \Big)^{\frac{1}{1+\theta+\sigma}}.
$$
Hence we have the estimate
\begin{align*}
\| E \|_{\Hcurl{\Omega}} \leq C 
(\zeta + M_0) \log\Big( \frac{\zeta + M_0}{\eta + M_0} \Big)^{ \frac{-\theta}{1+\theta+\sigma}}.
\end{align*}

\end{proof}

\subsection{Proof of Theorem \ref{thm_CauchyProb_1}} \label{sec_thmCauchy}
In this subsection we prove the stability estimate of Theorem \ref{thm_CauchyProb_1}.
Recall that the Cauchy problem of Theorem \ref{thm_CauchyProb_1} considers solutions 
$E,H \in L^p(\Omega)^3$, for some $p>2$, 
to the problem
\begin{align} \label{eq_CauchyProbSys2}
\left\{
\begin{aligned}
\nabla \times E -  i\omega \mu H &= 0, & & \textnormal{in} \,\Omega,\\
\nabla \times H +  i\omega \veps E &= 0, & & \textnormal{in} \,\Omega,\\
\nu \times E &= f, & & \textnormal{on} \, \Gamma, \\
\nu \times H &= g, & & \textnormal{on} \, \Gamma,
\end{aligned}
\right.
\end{align}
where $f,g \in TB_{D}^{-1/p}(\Gamma)$, and $\Gamma \subset \p\Omega$ is open. We moreover
assumed that $\p \Omega$ is Lipschitz.

\medskip
\noindent
We will derive the stability estimate for the Cauchy problem from the estimates 
on the source problem derived in the previous sections. We will hence transform
the Cauchy problem, to a source problem in a larger domain.

Assume that $p \in \Gamma$, and that $D' \subset \R^3$ is a cylinder, so that
in a suitable coordinate system $p=0$ and
$$
D' := \{ (x',x_3) \in \R^3 \,:\, |x'| \leq \rho_0, \, |x_3|\leq L_0 \}.
$$
The cylinder $D'$ can be chosen, so that $\p\Omega \cap D'$ is the graph of
a Lipschitz function defined on $\{x' \in \R^2\,:\, |x'| \leq \rho_0\}$.
We can furthermore arrange things, so that $\p (\Omega \cup D')$ is Lipschitz regular.
And finally we choose $D'$ so that
$$
D' \cap \p \Omega \subset \subset \Gamma.
$$
We also use the notation
$$
D:= D' \setminus \Omega,  \quad\quad
\Gamma' := \Gamma \cap D. 
$$
Now extend the  permittivity $\veps$ and permeability $\mu$, as $\veps_e$ and 
$\mu_e$, so that
$$
\veps_e, \mu_e \in W^{1,\infty}(\Omega \cup D; \R^{3\times 3}), \quad \veps_e, 
\mu_e > \tilde \gamma_0 >0.
$$
The next Lemma gives us extensions of $E$ and $H$ to the set $\Omega \cup D$.

\begin{lem} \label{lem_Ee}
There exists
$$
E_e, H_e \in \Hspcurl{0}{p}{\Omega\cup D},
$$
for some $p>2$, such that
$$
E_e|_\Omega = E,\quad 
H_e|_\Omega = H, 
$$
and which obey the estimate 
$$
\| H_e \|_{ \Hspcurl{0}{p}{ D}  }
+ \| E_e \|_{ \Hspcurl{0}{p}{ D}  }
\leq C\big( 
\| f \|_{ TB^{-1/p}_{D}(\Gamma)}
+ \| g \|_{ TB^{-1/p}_{D}(\Gamma)}
\big).
$$
\end{lem}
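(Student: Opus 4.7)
The strategy is to build the extensions by producing, in the exterior patch $D$, vector fields whose tangential traces on the common interface match those of $E$ and $H$ coming from $\Omega$; the desired extensions on $\Omega \cup D$ are then obtained by gluing across this interface. Write $\partial D = \Gamma' \cup \Sigma$, where $\Gamma' := \partial \Omega \cap D'$ is the interior interface (contained in $\Gamma$ by the choice of $D'$) and $\Sigma := \partial D \cap \partial D'$ is the outer portion, which lies on $\partial(\Omega \cup D)$. On $\Gamma'$ the tangential trace of $E$ from $\Omega$ equals $f|_{\Gamma'}$, and similarly for $H$ and $g$. Since $\operatorname{supp}(f),\operatorname{supp}(g)\subset \Gamma$ and $\Gamma'\subset\subset\Gamma$, extending $f$ and $g$ by zero across $\Sigma$ gives well-defined elements of $TB^{-1/p}_{D}(\partial D)$ with norms controlled by the $TB^{-1/p}_{D}(\Gamma)$ norms.

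Next I would invoke an $L^p$-solvability result for a Maxwell-type boundary value problem on the Lipschitz region $D$ with tangential Dirichlet data in $TB^{-1/p}_D(\partial D)$. The extended coefficients $\veps_e, \mu_e$ are Lipschitz and uniformly elliptic, so the $L^p$ theory in \cite{KS,M2} applies. A convenient auxiliary problem is the coercive system
\[
\nabla \times \bigl(\mu_e^{-1}\nabla \times E_e\bigr) + \veps_e E_e = 0 \text{ in } D,\qquad \nu \times E_e = f \text{ on } \partial D,
\]
which avoids any spectral obstruction on $D$ and yields a solution $E_e \in H^{0,p}(\operatorname{curl}; D)$ with
\[
\|E_e\|_{H^{0,p}(\operatorname{curl}; D)} \leq C\,\|f\|_{TB^{-1/p}_{D}(\Gamma)}.
\]
The same scheme applied with data $g$ produces $H_e \in H^{0,p}(\operatorname{curl}; D)$ with the analogous bound.

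The final step is the gluing: define $E_e$ on $\Omega \cup D$ to be $E$ on $\Omega$ and the above solution on $D$, and likewise for $H_e$. Since the tangential traces agree on $\Gamma'$, a standard integration by parts against smooth compactly supported test fields shows that the distributional curl on $\Omega \cup D$ carries no singular surface contribution on $\Gamma'$, so $E_e, H_e \in H(\operatorname{curl}; \Omega \cup D)$. The $L^p$ integrability is inherited from each piece, giving $E_e, H_e \in H^{0,p}(\operatorname{curl}; \Omega\cup D)$, and the norm estimate on the $D$ part is exactly the bound from the auxiliary problem.

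The main obstacle is invoking the $L^p$-trace-lifting theory for Maxwell in precisely the right form on a Lipschitz domain with data in the anisotropic Besov space $TB^{-1/p}_{D}$; for $p=2$ this is classical, but for $p>2$ one must appeal to the $L^p$-regularity established in \cite{KS,M2}. Introducing the coercive zeroth-order term above (rather than trying to solve the original time-harmonic Maxwell system on $D$, whose spectrum is unknown) removes any possible resonance, so well-posedness is unconditional, and the matching of tangential traces across $\Gamma'$—which is what makes gluing work—is automatic from the boundary condition.
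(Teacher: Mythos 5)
Your overall strategy is the same as the paper's: produce a field on $D$ whose tangential trace on $\Gamma'$ matches $f$ (resp.\ $g$), then glue it to $E$ (resp.\ $H$) across the interface, verifying membership in $H^{0,p}(\operatorname{curl};\Omega\cup D)$ via matching traces. The difference is in how you produce the field on $D$. You solve an auxiliary \emph{coercive} Maxwell boundary value problem on $D$ with tangential Dirichlet data, which requires appealing to $L^p$-well-posedness of such a system on a Lipschitz domain with data in $TB^{-1/p}_D(\partial D)$. The paper avoids this entirely: it uses only the continuous right inverse $\eta_{t,p}\colon TB^{-1/p}_{D}(\Gamma')\to H^{0,p}(\operatorname{curl};D)$ of the tangential trace operator (Lemma~\ref{lem_trace}, i.e.\ Mitrea's Theorem~3.6), sets $W:=\eta_{t,p}f$, and glues. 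Since the extension is not required to satisfy any equation in $D$, solving a BVP is an unnecessary detour; the trace lifting operator is a strictly weaker tool and is exactly what is cited. Notably, your closing remark identifies ``$L^p$-trace-lifting theory for Maxwell'' as the main obstacle and proposes the coercive BVP to sidestep it, but the trace lifting is in fact the readily available tool, and the BVP is the harder thing to justify---its $L^p$-solvability on a Lipschitz domain is not directly provided by the estimates in \cite{KS}, which concern regularity of solutions rather than existence. So while your argument is morally correct and would work given the $L^p$-well-posedness, it introduces an avoidable dependency; using $\eta_{t,p}$ directly is both simpler and is what the paper does.
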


\begin{proof}
We prove the claim for $E$. The assumption is such that $E \in \Hspcurl{0}{p}{\Omega}$. 
By Proposition 2.4 in \cite{M2} we have that 
$(\nu \times E)|_{\Gamma'} = f \in TB^{-1/p}_{D}(\Gamma')$.

Lemma \ref{lem_trace} gives a continuous  right inverse 
$\eta_{t,p} : TB^{-1/p}_{D}(\Gamma') \to \Hspcurl{0}{p}{D}$ 
to the tangential  operator $(\nu \times \, \cdot)|_{\Gamma'}$, and we set
$W := \eta_{t,p} f \in \Hspcurl{0}{p}{D}$, so that
$$
\| W \|_{ \Hspcurl{0}{p}{D} } \leq C  \| f \|_{ TB^{-1/p}_{D}(\Gamma)}.
$$
We now set
$$
E_e(x) =:\left\{
\begin{aligned}
&E(x), & x \text{ in } \Omega, \\
&W(x), & x \text{ in } D. \\
\end{aligned}
\right.
$$
It is straight forward to check that $E_e \in \Hspcurl{0}{p}{\Omega \cup D}$,
using the fact that the tangential traces of $E$ and $W$ are in agreement on $\Gamma'$.

\end{proof}

\noindent
The next Lemma shows that the extensions $E_e$ and $H_e$ solve a source problem 
in the extended domain $\Omega \cup D$. 

\begin{lem} \label{lem_Eesol}
There exists $\tilde F, \tilde G \in L^p(\Omega \cup D)^3$, such that 
$(E_e,H_e)$ solves 
\begin{align} \label{eq_extProbSys}
\left\{
\begin{aligned}
\nabla \times E_e -  i\omega \mu H_e &= \tilde F, & & \\
\nabla \times H_e+  i\omega \veps E_e &= \tilde G, & &\\
\end{aligned}
\right. \quad \textnormal{in} \,\Omega\cup D.
\end{align}
Moreover we  have  the estimate
$$
\| \tilde F\|_{L^p(\Omega \cup D)} 
+ \| \tilde G\|_{L^p(\Omega \cup D)} 
\leq C\big( 
\| f \|_{ TB^{-1/p}_{D}(\Gamma)}
+ \| g \|_{ TB^{-1/p}_{D}(\Gamma)}
\big).
$$
\end{lem}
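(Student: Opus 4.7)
The plan is to simply \emph{define} the sources $\tilde F$ and $\tilde G$ as the Maxwell residuals of the extended fields and then verify the two required properties: that these residuals are supported in $D$ and, on $D$, belong to $L^p$ with the stated bound.

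First I would set
$$
\tilde F := \nabla \times E_e - i\omega\, \mu_e H_e, \qquad \tilde G := \nabla \times H_e + i\omega\, \veps_e E_e,
$$
interpreted a priori as distributions on $\Omega \cup D$. On $\Omega$, by construction $E_e = E$, $H_e = H$, $\mu_e = \mu$, $\veps_e = \veps$, and $(E,H)$ solves the homogeneous Maxwell system \eqref{eq_CauchyProbSys2}, so both $\tilde F$ and $\tilde G$ vanish identically on $\Omega$. On $D$, one has $E_e = W$, $H_e = \tilde W$, where $W, \tilde W \in \Hspcurl{0}{p}{D}$ come from applying the right inverse $\eta_{t,p}$ of Lemma \ref{lem_trace} to $f$ and $g$, respectively; hence $\nabla \times E_e, \nabla \times H_e \in L^p(D)^3$ by definition of $\Hspcurl{0}{p}{D}$, and the lower-order terms $\mu_e H_e$, $\veps_e E_e$ lie in $L^p(D)^3$ because $\mu_e, \veps_e \in W^{1,\infty}$. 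Thus $\tilde F, \tilde G \in L^p(\Omega \cup D)^3$.

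The only point that requires care is that passing from $\Omega$ and $D$ to the union does not create a surface distribution on the interface $\Gamma' = \partial \Omega \cap D$. This is exactly what Lemma \ref{lem_Ee} (and its analogue for $H$) takes care of: the tangential traces of $E$ and $W$ match on $\Gamma'$ (both equal $f$), so $E_e \in \Hspcurl{0}{p}{\Omega \cup D}$, meaning its distributional curl across $\Gamma'$ is a genuine $L^p$ function rather than a tangential jump. The same reasoning applies to $H_e$. Consequently the computation of $\tilde F$ and $\tilde G$ can indeed be carried out piecewise on $\Omega$ and $D$, and the pieces glue to honest $L^p$ functions on $\Omega \cup D$.

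For the estimate, I would use that $\tilde F$ and $\tilde G$ vanish on $\Omega$, so
$$
\|\tilde F\|_{L^p(\Omega\cup D)} + \|\tilde G\|_{L^p(\Omega\cup D)} \leq C\bigl(\|E_e\|_{\Hspcurl{0}{p}{D}} + \|H_e\|_{\Hspcurl{0}{p}{D}}\bigr),
$$
with $C$ depending on $\|\mu_e\|_{W^{1,\infty}}, \|\veps_e\|_{W^{1,\infty}}$ and $\omega$. The bound from Lemma \ref{lem_Ee} then gives
$$
\|\tilde F\|_{L^p(\Omega\cup D)} + \|\tilde G\|_{L^p(\Omega\cup D)} \leq C\bigl(\|f\|_{TB_D^{-1/p}(\Gamma)} + \|g\|_{TB_D^{-1/p}(\Gamma)}\bigr),
$$
which is the claim. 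The main (mild) obstacle is the interface verification: everything else is a direct computation, but one must appeal to the matching of tangential traces to avoid spurious distributional terms on $\Gamma'$.
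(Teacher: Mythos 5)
Your argument is correct and takes a more direct route than the paper's. You define $\tilde F$, $\tilde G$ directly as the first-order Maxwell residuals of $(E_e,H_e)$, observe that they vanish on $\Omega$ (where $(E,H)$ solves the homogeneous system), and that on $D$ they are $L^p$ since $E_e|_D, H_e|_D\in\Hspcurl{0}{p}{D}$; the one delicate point, which you rightly isolate, is that the distributional curls of $E_e,H_e$ across $\Gamma'$ carry no tangential-jump layer, which is exactly what $E_e,H_e\in\Hspcurl{0}{p}{\Omega\cup D}$ from Lemma \ref{lem_Ee} guarantees, and the estimate is then immediate from that lemma. The paper instead works at the level of the weak second-order formulation: it splits the sesquilinear form $B_{\Omega\cup D}(E_e,\cdot)$ into $B_\Omega+B_D$, uses the homogeneous equation on $\Omega$ to convert $B_\Omega$ into an interfacial duality pairing involving $\nu\times H$, and reads off source terms that reproduce this pairing. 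This is the dual picture of the same trace-matching fact, but it is less transparent: the paper's displayed choice assigns $\tilde F,\tilde G$ nonzero values on $\Omega$ (in terms of $H$ and $\nabla\times H$ there), from which the stated $L^p$ bound by the boundary data alone is not immediate; one still has to move that interfacial contribution over to a $D$-supported volume term, which is exactly what your first-order computation produces automatically. Both routes supply the needed source representation for $L_{\veps_e,\mu_e}E_e$ in $\Hscurl{-1}{\Omega\cup D}$ used in the proof of Theorem \ref{thm_CauchyProb_1}, but your version is shorter, stays entirely in the first-order framework, and yields the support-in-$\overline{D}$ property explicitly.
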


\begin{proof}
We write the system \eqref{eq_extProbSys} as the corresponding 2nd-order equation, and
consider its weak solutions (for details see e.g. Theorem 2.1 in \cite{HLL}). 
This amounts to finding  $\tilde F, \tilde G \in L^2(\Omega \cup D)$
such that
$$
B_{\Omega \cup D}(E_e,U) = \int_{\Omega \cup D} i \omega \tilde G \cdot \ov{U} 
- \mu^{-1}_e \tilde F \cdot \nabla \times \ov{U}\,dx, \quad \forall U \in \Hocurl{\Omega\cup D}
$$
and where $B$ is the sesquilinear form
$$
B_S(E_e,U) := \int_{S} \mu^{-1}_e \nabla \times E_e  \cdot \nabla \times  \ov{U} 
- \omega^2 \veps_e E_e \cdot \ov{U} \,dx.
$$
On the set $\Omega$, we have by using the weak definition of boundary condition 
$(\nu \times H )|_\Gamma = g \in TB_{D}^{-1/p}(\Gamma) \subset TH^{-1/2}(\Gamma)$, that
\begin{align*} 
B_\Omega(E_e,U) = 
\langle i \omega \nu \times H,\, U  \rangle_{\mathcal{D}(\Gamma)}
=i\omega \int_{\Omega} \nabla \times H \cdot \ov{U} 
- H \cdot \nabla \times \ov{U}  \,dx.
\end{align*}
On the set $D$ we have on the other hand that
\begin{align*} 
B_D(E_e,U) = 
\int_{D} \mu^{-1}_e \nabla \times E_e  \cdot \nabla \times  \ov{U} 
- \omega^2 \veps_e E_e \cdot \ov{U} \,dx.
\end{align*}
Thus picking 
\begin{align*} 
\tilde F := 
\left\{
\begin{aligned}
&\quad\mu H/(i\omega) , & \text{ in } \Omega,  & & \\
&-\nabla \times E_e, & \text{ in } D, & &\\
\end{aligned}
\right. \quad
\tilde G := 
\left\{
\begin{aligned}
&\nabla \times H, & \text{ in } \Omega,  & & \\
&i \omega \veps_e E_e, & \text{ in } D, & &\\
\end{aligned}
\right. 
\end{align*}
gives the desired result.
The norm estimate of the claim follows from  Lemma \ref{lem_Ee}.

\end{proof}

\noindent
The two previous lemmas show that $E_e$ and $H_e$ have the necessary properties to
apply Proposition \ref{prop_globProp}, which we will do next in deriving 
the stability estimate for the Cauchy problem and thus proving Theorem \ref{thm_CauchyProb_1}.
We repeat the statement of Theorem \ref{thm_CauchyProb_1} for the convenience of the reader.

\thmTwo*


\begin{proof} Let $E_e,H_e \in \Hspcurl{0}{p}{\Omega \cup D}$ be the vector fields given 
by Lemma \ref{lem_Ee}. By Lemma \ref{lem_Eesol} we see that $E_e$ solves the second order system
$$
L_{\veps,\mu} E_e = i \omega \tilde G - \nabla \times ( \mu^{-1} \tilde F )  \;\in \Hscurl{-1}{\Omega \cup D}.
$$
We will apply the estimate of Proposition \ref{prop_globProp}.
Choose a ball $B(x_0,r_0) \subset \subset D$, and suppose
that $\eta'$ and $\zeta'$ are in accordance with Proposition \ref{prop_globProp},
so that
$$
\eta' \geq  \|E_e\|_{\Hcurl{B(x_0,r_0)}}, \quad 
\quad  \zeta' \geq \|E_e\|_{\Hspcurl{0}{p}{\Omega \cup D}}. 
$$
Then we have by the estimate in Proposition \ref{prop_globProp} on $\Omega \cup D$ that
\begin{align} \label{eq_logestExt}
\|E_e\|_{\Hcurl{\Omega \cup D}} \leq C(\zeta' +  \tilde M_0) 
\omega\Big( \frac{\eta' + \tilde  M_0}{\zeta' + \tilde  M_0} \Big),
\end{align}
where $\omega(t) := C \big( \log ( 1/t) \big)^{-m},\; t<1$, 
and where we pick
$$
\tilde M_0 = \| \tilde F \|_{L^2(\Omega \cup D)^3} +   \| \tilde G \|_{L^2(\Omega \cup D)^3}. 
$$
Now suppose that
$$
\eta \gtrsim\| f \|_{TB_{D}^{-1/p}(\Gamma)} +  \| g \|_{TB_{D}^{-1/p}(\Gamma)}, 
\quad  
\zeta \gtrsim  \| E \|_{L^p(\Omega)^3}  + \| H \|_{L^p(\Omega)^3} 
$$
as in the assumptions of the claim.
We claim that 
\begin{align}\label{eq_etaM0}
C\eta - \tilde M_0 \geq \|E_e\|_{\Hcurl{B(x_0,r_0)}},
\end{align}
for some constant $C>0$.
To see this note firstly that by Lemma \ref{lem_Eesol} 
$$
\tilde M_0 \lesssim \eta.
$$
On the other hand by Lemma \ref{lem_Ee}
$$
\|E_e\|_{\Hcurl{B(x_0,r_0)}} \leq \| E_e \|_{\Hcurl{D}}  \lesssim \eta.
$$
Thus \eqref{eq_etaM0} holds. Next we claim that
\begin{align}  \label{eq_etazetaM0}
C(\eta + \zeta) - \tilde M_0 \geq
\|E_e\|_{\Hspcurl{0}{p}{\Omega \cup D}} 
\end{align}
for some constant $C>0$.
By the estimate in Lemma \ref{lem_Ee} we have
\begin{align*}
\| E_e \|_{ \Hspcurl{0}{p}{\Omega \cup D} }
\lesssim
\| E \|_{ \Hspcurl{0}{p}{\Omega}} + \| f \|_{TB_{D}^{-1/p}(\Gamma)} + \| g \|_{TB_{D}^{-1/p}(\Gamma)} 
\lesssim \zeta + \eta.
\end{align*}
Combining this with the earlier estimate on $\tilde M_0$, shows that
\eqref{eq_etazetaM0} holds.

We can now choose $\eta' = C\eta - \tilde M_0$ as given by \eqref{eq_etaM0} 
and $\zeta' = C(\eta + \zeta) - \tilde M_0$ as given by \eqref{eq_etazetaM0},
and insert these in to estimate \eqref{eq_logestExt} gives that
\begin{align*}
\|E\|_{\Hcurl{\Omega}} \leq C(\zeta + \eta) 
\omega\Big( \frac{\eta}{ \zeta + \eta} \Big).
\end{align*}
which is the first part of the estimate we wanted. The estimate for $H$,
follows from that
$$
\| H \|_{ \Hcurl{\Omega} }
 \leq C \| E \|_{ \Hcurl{\Omega} },
$$
since $(E,H)$ solves the system \eqref{eq_CauchyProbSys2}.

\end{proof}

\section{Runge approximation}%
\label{sec_Runge_approximation}

\noindent
In this second part of the paper we prove Theorem \ref{thm_qrunge}, which gives the 
quantitative Runge approximation for the time-harmonic Maxwell equations
extending the results of \cite{RS}.
We begin by deriving a quantitative form of the unique continuation principle.

\subsection{A quantitative unique continuation result} \label{sec_qucp}
Here we derive a quantitative unique continuation type result for a source problem, which
readily follows from the Cauchy stability estimate of Theorem \ref{thm_CauchyProb_1}
and the $L^p$-estimates for the Maxwell system obtained in \cite{KS} and \cite{KS2}.
This is one of the main tools in the Runge approximation argument in the following section.

Note that the result is formulated for the adjoint system, since this will be used later.
A similar estimate can likewise be obtained for the original Maxwell system. 
Note also that we formulate the estimate of the proposition in a way that allows, 
the source terms $F$ and $\tilde F$ to be in a $L^p$-space that is greater than
in which the solutions appear to be.
This formulation will be useful later when proving Theorem \ref{thm_qrunge} and we need to interpolate.

\begin{prop}  \label{prop_QUCP}
Let $\Omega \subset \R^3$ be a bounded Lipschitz domain, and
let $D \subset \Omega$ be a relatively open subset with Lipschitz boundary, for which
$S:=\Omega \setminus \ov{D}$ is connected.
Suppose that $W,U \in \Hspcurl{0}{p}{\Omega}$, for some $p>2$, 
solve the problem
\begin{align} \label{eq_CauchyProbQUCP}
\left\{
\begin{aligned}
\nabla \times W +  i\omega \mu U &= \tilde F, & & \textnormal{in} \,\Omega,\\
\nabla \times U -  i\omega \veps W &= F, & & \textnormal{in} \,\Omega,\\
\nu \times W &= 0, & & \textnormal{on} \, \p \Omega, \\
\end{aligned}
\right.
\end{align}
where $F,\tilde F \in L^q(\Omega)^3$, $q \geq p$, are such that 
$\supp(F),\supp(\tilde F) \subset \overline{D}$, and
$\Gamma \subset \p \Omega \setminus \ov{D}$ is 
open and non-empty.
Then there are constants $m,C>0$, such that
\begin{align*}
\|W\|_{H(\textnormal{curl}; S)} 
+ \| U \|_{H(\textnormal{curl}; S)} 
\leq 
C\frac{\| F\|_{L^q(D)^3} + \| \tilde F\|_{L^q(D)^3}}
{\Bigg( \log C 
\frac{\|F\|_{L^q(D)^3} + \|\tilde F\|_{L^q(D)^3}}{\|\nu\times U \|_{TB_{D}^{-1/p}(\Gamma)}} 
\Bigg)^{m}}.
\end{align*}
\end{prop}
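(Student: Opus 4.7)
The plan is to reduce to the Cauchy stability estimate of Theorem \ref{thm_CauchyProb_1} on the subdomain $S=\Omega\setminus\overline{D}$, where by the support hypothesis on $F,\tilde F$ the pair $(W,U)$ satisfies the homogeneous Maxwell system (up to a sign convention for the adjoint problem, which only amounts to a relabelling and does not alter the three-ball inequality or any step of Section~\ref{sec_Cauchy}). On $\Gamma\subset\p S$ we have two pieces of Cauchy data immediately available: $\nu\times W|_\Gamma=0$, inherited from the Dirichlet condition on $\p\Omega$, and $\nu\times U|_\Gamma$, whose $TB_D^{-1/p}(\Gamma)$-norm is the quantity appearing in the denominator of the asserted bound. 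Since $\p\Omega$ and $\p D$ are both Lipschitz and $S$ is connected, $S$ is a bounded Lipschitz domain, so Theorem~\ref{thm_CauchyProb_1} is applicable.

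First I would apply Theorem~\ref{thm_CauchyProb_1} to $(W,U)|_S$ with Cauchy data $f=0$ and $g=\nu\times U|_\Gamma$, choosing
$$
\eta := \|\nu\times U\|_{TB_D^{-1/p}(\Gamma)}, \qquad
\zeta := \|W\|_{L^p(S)^3}+\|U\|_{L^p(S)^3}.
$$
This yields the logarithmic inequality
$$
\|(W,U)\|_{\Hcurl{S}^2}\;\leq\; C(\zeta+\eta)\Bigl(\log\tfrac{\zeta+\eta}{\eta}\Bigr)^{-m}.
$$
To replace $\zeta$ by the source norms I invoke the $L^p$-estimates from \cite{KS,KS2} for the Maxwell system on $\Omega$ with vanishing tangential trace: since $q\geq p$ and $D$ is bounded, $F,\tilde F\in L^p(\Omega)^3$ as well, and these estimates give
$$
\|W\|_{L^p(\Omega)^3}+\|U\|_{L^p(\Omega)^3}\;\lesssim\; \|F\|_{L^q(D)^3}+\|\tilde F\|_{L^q(D)^3}.
$$
In particular $\zeta\lesssim \|F\|_{L^q(D)^3}+\|\tilde F\|_{L^q(D)^3}$.

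The final step is to substitute this bound into the logarithmic estimate. Using the monotonicity of $t\mapsto t(\log(t/\eta))^{-m}$ for $t$ bounded below by a constant multiple of $\eta$ (the range where the estimate is non-trivial; otherwise $\eta$ already dominates the source norms and the inequality is immediate after adjusting $C$), we may replace $\zeta+\eta$ in the numerator and in the logarithm by $C(\|F\|_{L^q}+\|\tilde F\|_{L^q})$, producing the form stated in the proposition. The main obstacle I expect is the bookkeeping in this last absorption: one must ensure that inflating $\zeta+\eta$ to $\|F\|_{L^q}+\|\tilde F\|_{L^q}$ in both places preserves the log exponent $m$, which is why it is convenient that Theorem~\ref{thm_CauchyProb_1} takes $\eta$ and $\zeta$ as upper bounds rather than equalities, so that one is free to enlarge them consistently. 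A secondary technical point, handled by Proposition~2.4 of \cite{M2} and the trace machinery used in Lemma~\ref{lem_Ee}, is verifying that $\nu\times U|_\Gamma\in TB_D^{-1/p}(\Gamma)$ with a norm controlled by $\|U\|_{\Hspcurl{0}{p}{\Omega}}$, which guarantees the ratio inside the logarithm is well defined.
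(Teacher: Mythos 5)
Your reduction is the same as the paper's: observe that on $S=\Omega\setminus\overline{D}$ the pair $(W,U)$ satisfies the homogeneous Maxwell system (after the obvious relabelling $E:=U$, $H:=W$, $\tilde\varepsilon:=\mu$, $\tilde\mu:=\varepsilon$), so that Theorem~\ref{thm_CauchyProb_1} applies on $S$ with Cauchy data $\nu\times W|_\Gamma=0$ and $\nu\times U|_\Gamma$, and then bound the a priori term $\zeta$ by the source norms via the $L^p$-estimates of \cite{KS,KS2}. (Note one minor slip: with the paper's dictionary you have $f=\nu\times E=\nu\times U|_\Gamma$ and $g=\nu\times H=0$, not the other way round; this is harmless since Theorem~\ref{thm_CauchyProb_1} only uses $\|f\|+\|g\|$.)

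The one real issue is the last absorption step. You take the natural choice $\zeta:=\|W\|_{L^p(S)}+\|U\|_{L^p(S)}$ and then invoke monotonicity of $t\mapsto t(\log(t/\eta))^{-m}$, disposing of the complementary range by asserting ``otherwise $\eta$ already dominates the source norms''. That implication does not hold: the monotonicity fails when $\zeta+\eta<e^m\eta$, i.e.\ $\zeta<(e^m-1)\eta$, and since we only have an \emph{upper} bound $\zeta\lesssim M:=\|F\|_{L^q}+\|\tilde F\|_{L^q}$ (and no lower bound on $\zeta$), this says nothing about $\eta$ versus $M$. A patch is possible -- one can also show $\eta\lesssim\zeta$ from the trace estimate together with $\nabla\times U=i\omega\varepsilon W$ on $S$, which forces $\eta\asymp\zeta$ in the bad range and makes the log factor bounded, after which an elementary bound on $r\mapsto r(\log(C/r))^m$ finishes -- but that is not what you wrote. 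The paper avoids all of this by choosing $\zeta$ up front so that $\zeta+\eta$ is \emph{exactly} $C(\|F\|_{L^q}+\|\tilde F\|_{L^q})$: one sets $\zeta:=C(\|F\|_{L^q}+\|\tilde F\|_{L^q})-\eta$ with $C$ large, verifies via the $L^p$-estimates and the trace bound that this is an admissible choice for Theorem~\ref{thm_CauchyProb_1}, and then the conclusion of the theorem \emph{is} the desired estimate with no further manipulation. This is exactly the ``enlarge $\eta,\zeta$ consistently'' idea you mention at the end; you should execute it at the point of choosing $\zeta$, rather than afterwards via monotonicity, and then the case analysis disappears.
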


%
%

\begin{proof}
We begin by rewriting the adjoint system \eqref{eq_CauchyProbQUCP} in a form 
where it is clear that we can apply the Cauchy stability result of the Maxwell system.
Let $E$, $H$, $\tilde \veps$ and $\tilde \mu$ be defined as
$$
E := U,\quad\quad H:= W,\quad\quad \tilde \veps := \mu,\quad\quad \tilde \mu := \veps,\quad\quad \text{ in } \Omega.
$$
The system \eqref{eq_CauchyProbQUCP} implies, then that
\begin{align} \label{eq_nonAdjoint}
\left\{
\begin{aligned}
\nabla \times E -  i\omega \tilde \mu H &= 0, & & \textnormal{in} \,S,\\
\nabla \times H +  i\omega \tilde \veps E  &= 0, & & \textnormal{in} \,S.\\
\nu \times H &= 0, & & \textnormal{on} \, \p \Omega, \\
\end{aligned}
\right.
\end{align}
It is now clear that $(E,H)$ solves a Cauchy problem of the form 
of Theorem \ref{thm_CauchyProb_1} in the set $S$, where 
$g := \nu \times H|_\Gamma = 0$, and $f := \nu \times E |_\Gamma$.
To apply Theorem \ref{thm_CauchyProb_1} we choose
$$
\eta := 
\|f\|_{TB_{D}^{-1/p}(\Gamma)}.
$$
And furthermore 
\begin{align}  \label{eq_zetadef}
\zeta := C (\| \tilde F \|_{ L^q(\Omega)^3}  + \| F \|_{ L^q(\Omega)^3})
- \|f\|_{TB_{D}^{-1/p}(\Gamma)},
\end{align}
where $C>0$ is a large constant, that we will specify shortly. 

Next we check that that $\eta$ and $\zeta$  satisfies the requirements
of Theorem \ref{thm_CauchyProb_1}, when $C$ is large enough. 
The condition on $\eta$ clearly holds, it thus enough to check the condition on the $\zeta$
defined above.

Firstly note that $F,\tilde F \in L^p(\Omega)^3$. 
By  the $L^p$-estimates on the solution of the Maxwell system \eqref{eq_CauchyProbQUCP}
obtained in \cite{KS} Theorem 1 and Remark 1 
(see also \cite{KS2} Theorem 4.5 and remark 4.6 for the
case of more regular boundary)  for $H$, which  solves
$$
\nabla \times (\tilde \veps^{-1} \nabla \times H ) - \omega^2 \tilde \mu H = 
- i\omega F + \nabla \times \big( \tilde \veps^{-1} \tilde F \big), \quad \text{ in } \Omega,
$$
with the boundary condition $(\nu\times H )|_{\p\Omega} = 0$, we get that
\begin{align}  \label{eq_Hest}
\| H \|_{\Hspcurl{0}{p}{\Omega}} 
\leq
C\big(\| \tilde F \|_{ L^p(\Omega)^3}
+ \|  F \|_{ L^p(\Omega)^3}\big).
\end{align}
Notice also that this together with \eqref{eq_nonAdjoint} implies that
\begin{align}  \label{eq_Eest}
\| E \|_{\Hspcurl{0}{p}{\Omega}} 
\leq
C\big(\| \tilde F \|_{ L^p(\Omega)^3}
+ \|  F \|_{ L^p(\Omega)^3}\big).
\end{align}
The above inequality and the continuity of the tangential trace, Lemma \ref{lem_trace}, gives that
$$
\|f \|_{TB_{D}^{-1/p}(\Gamma)} 
=
\|\nu\times E\|_{TB_{D}^{-1/p}(\Gamma)} 
\leq  
C\big(\| \tilde F \|_{ L^q(\Omega)^3} + \| F \|_{ L^q(\Omega)^3} \big),
$$
since $q \geq p$. This together with \eqref{eq_Hest} and \eqref{eq_Eest}, 
gives then that
\begin{align*} 
\| E \|_{ \Hspcurl{0}{p}{\Omega} }
+ \| H \|_{ \Hspcurl{0}{p}{\Omega} }
+ \|f \|_{TB_{D}^{-1/p}(\Gamma)} 
\leq C 
(\| \tilde F \|_{ L^q(\Omega)^3} + \| F \|_{ L^q(\Omega)^3}).
\end{align*}
So that picking $C$ in \eqref{eq_zetadef} as the constant $C$ for which the above inequality holds,
we get that
\begin{align*} 
\zeta 
\geq
\| E \|_{ \Hspcurl{0}{p}{\Omega} }
+ \| H \|_{ \Hspcurl{0}{p}{\Omega} }.
\end{align*}
Our choices of $\eta$ and $\zeta$ satisfy therefore the requirements of Theorem
\ref{thm_CauchyProb_1}, and we thus have the estimate
\begin{align*} 
\|E\|_{\Hcurl{S}}  + \|H\|_{\Hcurl{S}}
\leq C 
\frac{\| \tilde F \|_{ L^q(\Omega)^3} + \| F \|_{ L^q(\Omega)^3}}
{\Bigg( \log C \frac{\| \tilde F \|_{ L^q(\Omega)^3} + \| F \|_{ L^q(\Omega)^3}}
{\|\nu\times E \|_{TB_{D}^{-1/p}(\Gamma)}} \Bigg)^{m}},
\end{align*}
which directly implies the estimate of the claim.

\end{proof}

\noindent
To see why Proposition \ref{prop_QUCP} gives a 
quantitative version of the unique continuation principle, notice
that one gets that $(W,U)$ vanishes in $S$, when  
the norm of $\nu \times U |_{\Gamma}$ tends to zero.

\subsection{Quantitative Runge approximation}\label{sec_qrunge}
We are now ready to prove Theorem \ref{thm_qrunge}. The theorem is a direct consequence of Proposition
\ref{prop_QUCP} below.  

\medskip
\noindent
Define  the subspace $\mathcal{V} \subset TH_D^{-1/2}(\p \Omega) $, as 
$$
\mathcal{V} := \ov{\mathcal{V}_0},
$$
where the closure is in the $TH_D^{-1/2}(\p \Omega)$-norm, and
$$
\mathcal{V}_0 := \{ f \in TH_D^{-1/2}(\p \Omega) \;:\; \supp(f)\subset \Gamma \}.
$$
Furthermore we define the space
$$
\mathcal{X} := \ov{\mathcal{S}_A},
$$
where the closure is in the $L^2(A)^6$-norm, and
where $\mathcal{S}_A$ is the set in \eqref{eq_SA}, i.e.
$$
\mathcal{S}_A =\big \{ (E,H) \in \Hcurl{A}^2 \;:\; \mathcal{M}(E,H)=0, 
\text{ in } A \big \}. 
$$
We define the operator  $\mathcal{A}: \mathcal{V} \to \mathcal{X}$, 
as
$$
\mathcal{A}: f \mapsto  \big(E_f|_A,\, H_f|_A \big),
$$
where $(E_f,H_f)$ solves the system
\begin{align}\label{eq_Asys} 
\left\{
\begin{aligned}
\nabla \times E_f -  i\omega \mu H_f &= 0, & & \textnormal{in} \,\Omega,\\
\nabla \times H_f +  i\omega \veps E_f &= 0, & & \textnormal{in} \,\Omega,\\
\nu \times E &= f, & & \textnormal{on} \, \p\Omega. \\
\end{aligned}
\right.
\end{align}

\noindent
The next lemma gives a characterization of the  dual  $\mathcal{V}^*$.
This characterization essentially follows from the fact that
the dual of $TH_D^{-1/2}(\p\Omega)$ is isomorphic to $TH_C^{-1/2}(\p\Omega)$.
For more on this see \cite{KH} and \cite{C}.
We will need the following space
$$
\mathcal{W} := \ov{\mathcal{W}_0},
$$
where the closure is taken in the $TH_C^{-1/2}(\p \Omega)$-norm, and
$$
\mathcal{W}_0 := \{ g \in TH_C^{-1/2}(\p \Omega) \;:\; \supp(g)\subset \Gamma \}.
$$

\begin{lem} \label{lem_Vdual2}
We have the isomorphisms 
$$
\mathcal{I} : \mathcal{W} \eqsim \mathcal{V}^*,  \quad
\mathcal{J} : \mathcal{V} \eqsim \mathcal{W}^*,  \quad
\mathcal{I}: g \mapsto i_g(\cdot),\quad
\mathcal{J}: f \mapsto j_f(\cdot),
$$
where $i_g$ and $j_f$ are defined by 
\begin{align*} 
i_g(h) &:= 
\int_{\Omega} \nabla \times \eta_Tg  \cdot \ov{\eta_t h}  \,dx 
- \int_{\Omega} \eta_T g \cdot \nabla \times \ov{\eta_t h}  \,dx,
\\
j_f(h) &:= 
- \int_{\Omega} \nabla \times \eta_Th  \cdot \ov{\eta_t f}  \,dx 
+ \int_{\Omega} \eta_T h \cdot \nabla \times \ov{\eta_t f}  \,dx.
\end{align*}
\end{lem}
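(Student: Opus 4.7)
The plan is to reduce the statement to the well-known duality between $TH_D^{-1/2}(\p\Omega)$ and $TH_C^{-1/2}(\p\Omega)$, for which the paper refers to \cite{KH, C}. The starting identity is the Green's formula for $\Hcurl{\Omega}$: for $u,v\in\Hcurl{\Omega}$,
\[
\int_\Omega (\nabla\times u)\cdot \ov{v}\,dx \;-\; \int_\Omega u\cdot \nabla\times \ov{v}\,dx \;=\; \langle \nu\times u|_{\p\Omega},\,v\rangle,
\]
where the right-hand side is the ambient duality pairing between $TH_C^{-1/2}(\p\Omega)$ and $TH_D^{-1/2}(\p\Omega)$. Substituting $u=\eta_T g$ and $v=\eta_t h$ identifies $i_g(h)$ with $\langle g,h\rangle_{TH_C^{-1/2}(\p\Omega),\,TH_D^{-1/2}(\p\Omega)}$, and the analogous substitution identifies $j_f(h)$ with the same pairing, slots reversed (the sign difference in the definition of $j_f$ exactly compensates for the swap of arguments in Green's formula). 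As a byproduct, both pairings are independent of the choice of extensions $\eta_T$ and $\eta_t$.

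With this identification in hand, $\mathcal{I}$ is just the restriction-to-$\mathcal{V}$ of the ambient duality applied to elements of $\mathcal{W}$, and it is bounded because the extension operators $\eta_T$ and $\eta_t$ are continuous (cf.\ Lemma \ref{lem_trace} for the $L^p$-analogue) and the duality pairing is bounded. The same reasoning gives continuity of $\mathcal{J}$.

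To show that $\mathcal{I}$ is an isomorphism (and $\mathcal{J}$ by a symmetric argument), I would proceed in two steps. For injectivity: if $i_g\equiv 0$ on $\mathcal{V}$, then $\langle g,\varphi\rangle=0$ for every $\varphi\in\mathcal{V}_0$, so $g$ annihilates all tangential distributions supported in $\Gamma$; combined with $g\in\mathcal{W}$, which is the $TH_C^{-1/2}(\p\Omega)$-closure of elements supported in $\Gamma$, a weak-support argument forces $g=0$. For surjectivity: given $\ell\in\mathcal{V}^*$, extend it by Hahn--Banach to a continuous functional on all of $TH_D^{-1/2}(\p\Omega)$, and by non-degeneracy of the ambient pairing represent it as $\langle \tilde g,\cdot\rangle$ for some $\tilde g\in TH_C^{-1/2}(\p\Omega)$. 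One then modifies $\tilde g$ by an element of the annihilator of $\mathcal{V}$ in $TH_C^{-1/2}(\p\Omega)$ to produce $g\in\mathcal{W}$ with $\mathcal{I}g=\ell$.

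The main obstacle is the pair of support-theoretic steps in injectivity and surjectivity: precisely, identifying the annihilator of $\mathcal{V}$ inside $TH_C^{-1/2}(\p\Omega)$ with ``tangential distributions vanishing on $\Gamma$'', and dually showing that every element of the relevant quotient is represented by an element of $\mathcal{W}$. These support-and-closure properties for tangential $H^{-1/2}$-type distributions on a Lipschitz boundary are somewhat delicate, and the argument will rely on local mollification adapted to $\p\Omega$ together with the extension operators $\eta_T$ and $\eta_t$ to transfer statements on $\p\Omega$ to interior $\Hcurl{\Omega}$-estimates that can be controlled by density.
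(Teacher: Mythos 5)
Your outline correctly reduces $i_g$ and $j_f$ to the ambient $TH_C^{-1/2}$--$TH_D^{-1/2}$ duality pairing via Green's formula, and the continuity argument matches the paper. However, the injectivity and surjectivity steps — which you flag as ``somewhat delicate'' — are not actually reductions of the problem but restatements of it, and this is where the proposal has a genuine gap. For injectivity you write: ``$g$ annihilates all tangential distributions supported in $\Gamma$; combined with $g\in\mathcal{W}$ a weak-support argument forces $g=0$.'' Since $\mathcal{V}$ is by definition the closure of $\mathcal{V}_0$, ``$g\in\mathcal{W}$ annihilates all of $\mathcal{V}_0$ $\Rightarrow$ $g=0$'' is literally the statement of injectivity of $\mathcal{I}$, not a weaker or more tractable statement. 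Moreover the intuitive ``supports are disjoint $\Rightarrow$ product vanishes'' reasoning is precisely what fails in borderline negative-order spaces: $g$ would be weakly supported in $\overline{\Gamma}\cap(\p\Omega\setminus\Gamma)=\p\Gamma$, a codimension-one set in $\p\Omega$, and at regularity $-1/2$ whether distributions supported on such sets must vanish is a delicate borderline issue that depends on the regularity of $\p\Gamma$ and is not automatic. Similarly, the surjectivity step ``modify $\tilde g$ by an annihilator element to produce $g\in\mathcal{W}$'' presupposes the decomposition $TH_C^{-1/2}(\p\Omega)=\mathcal{W}+\mathcal{V}^\perp$, which combined with injectivity is equivalent to the isomorphism being proven.

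The paper takes a different and more robust route that bypasses these support-theoretic subtleties entirely. For surjectivity, given $\ell\in\mathcal{V}^*$ it pulls $\ell$ back to a functional on the closed subspace $\ov{Z}\subset\Hcurl{\Omega}$ of fields with tangential trace supported in $\Gamma$, applies the Riesz representation theorem in the Hilbert space $\Hcurl{\Omega}$ to get a representative $U_\ell\in\ov{Z}$, deduces the interior PDE $\nabla\times(\nabla\times U_\ell)=-U_\ell$ by testing against $H_0(\operatorname{curl};\Omega)$, and reads off the representing $g:=\nu\times(\nu\times(\nabla\times U_\ell))|_{\p\Omega}$ from the integration-by-parts formula. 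For injectivity, rather than a support argument, it uses the symmetry $i_g(f)=j_f(g)$ together with the (already established) surjectivity of $\mathcal{J}$: if $g\ne0$ with $i_g\equiv0$, one manufactures a functional on $\mathcal{W}$ peaking at $g$, represents it as $j_f$ by surjectivity of $\mathcal{J}$, and derives $\|g\|=j_f(g)=i_g(f)=0$, a contradiction. Both steps are elementary Hilbert-space arguments that never touch the fine structure of distributions near $\p\Gamma$. If you wish to salvage your approach you would need to supply, as a separate lemma, a density/complementation statement for $\mathcal{V},\mathcal{W}$ inside the ambient trace spaces, which for Lipschitz $\p\Omega$ and general open $\Gamma$ is nontrivial and is exactly what the paper's Riesz-representation construction is designed to avoid.
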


\begin{proof}
The open mapping Theorem implies that it is enough to check that $\mathcal{I}$ and $\mathcal{J}$
are continuous, injective and surjective. 
To prove the continuity of $\mathcal{I}$, we let $g \in C_0^\infty(\Gamma)^3 \cap \{ \nu \times g|_{\p \Omega} = 0 \}$.
These functions are dense in $\mathcal{W}$.
Let $X:= \{  \varphi \in C^\infty_0(\Gamma)^3\cap \mathcal{V}\,:\, \| \varphi \|_{\mathcal{V}} = 1 \}$. 
Continuity follows, from the estimate 
\begin{align*} 
\| \mathcal{I} g \|_{\mathcal{V}^*}
&= \sup_{ \varphi \in X} \big|  i_g(\varphi) \big| \\
&\leq 
C \sup_{\varphi \in X} 
\big(\| \eta_Tg  \|_{ \Hcurl{\Omega} } \| \eta_t\varphi  \|_{ \Hcurl{\Omega} }\big)
\\
&\leq
C \| g \|_{ TH_C^{-1/2}(\p \Omega)}.
\end{align*}
The continuity of $\mathcal{J}$ is proved similarly.

\medskip
Next we show that $\mathcal{I}$ is surjective. The proof that $\mathcal{J}$ is surjective is similar and
is omitted.
To this end we let $\ell \in \mathcal{V}^*$. 
Let
$$
Z :=\{ U \in \Hcurl{\Omega} \,:\, \supp (\nu \times U|_{\p\Omega}) \subset \subset \Gamma \}. 
$$
The closure $\ov{Z}$ in the $\Hcurl{\Omega}$-norm is a closed subspace of $\Hcurl{\Omega}$.
The formula $\ell \circ (\nu \times \,\cdot\,)|_{\p\Omega}$, defines then a
 linear functional in $\ov{Z}^*$.
By the Riesz representation Theorem there is an $U_\ell \in \ov{Z}$, such that
$$
\ell \circ (\nu \times \,\cdot\,)|_{\p\Omega} = \langle  U_\ell,\,\cdot\, \rangle_{\Hcurl{\Omega}}.
$$
Taking  $\varphi \in H_0(\operatorname{curl}\,;\Omega)$,  and using the integration by parts formula
shows that
\begin{align}  \label{eq_Ul}
\nabla \times (	\nabla \times U_\ell) = - U_\ell.
\end{align}
Thus for any $f \in \mathcal{V}$, 
we have using \eqref{eq_Ul} that
\begin{align*} 
\ell (f) = 
\ell \circ (\nu \times \eta_t f \,)|_{\p\Omega} 
= 
\int_{\Omega} \nabla \times U_\ell \cdot \nabla \times \ov{\eta_t f}   
- \nabla \times (\nabla \times U_\ell) \cdot \ov{\eta_t f}   \,dx.
\end{align*}
By \eqref{eq_Ul} we have that 
$g :=  \nu \times (\nu \times (\nabla \times U_\ell))|_{\p \Omega} \in TH^{-1/2}_C(\p\Omega)$,
and by the integration by parts formula
\begin{align*}  
\int_{\Omega} \nabla \times U_\ell \cdot \nabla \times \ov{\eta_t f}
- \nabla \times (\nabla \times U_\ell) \cdot \ov{\eta_t f}   \,dx 
&= 
\int_{\Omega} \nabla \times \eta_T g \cdot \ov{\eta_t f} 
- 
\eta_T g \cdot \nabla \times \ov{\eta_t f}   \,dx \\ 
&=
i_g(f).
\end{align*}
Thus $\ell(f) = i_g(f) = \mathcal{I}g(f)$ and $\mathcal{I}$ is surjective.

\medskip
Finally we prove  the injectivity of $\mathcal{I}$ and $\mathcal{J}$. 
We give the proof for $\mathcal{I}$, the proof for $\mathcal{J}$ is similar. 
We will use the surjectivity of $\mathcal{J}$ and the fact that
$$
i_g(f) = j_f(g),
$$
which follows from the definitions. Suppose that $\mathcal{I}$ is not injective, so 
that there is an $g\neq 0$, with $i_g \equiv 0$. Define the functional
$$
\ell ( h) := \mathcal{L} \circ \operatorname{proj}_g(h),
\quad
\mathcal{L}(c \hat g) := c,
\quad \hat g := g /  \| g \|_{ TH_C^{-1/2}(\p\Omega) },
$$
where $h \in \mathcal{W}$.
Clearly $\mathcal{L} : \operatorname{span}\{g\} \to \R$ is an isomorphism. 
By the surjectivity
of $\mathcal{J}$, there is a $j_f$, such that $j_f = \ell$ and thus
$$
\| g \|_{ TH_C^{-1/2}(\p\Omega) } =  \ell(g) = j_f(g) = i_g(f) = 0,
$$
which is a contradiction, since $g \neq0$.

\end{proof}

\noindent
We need the Hilbert space adjoint $\mathcal{A}^* :\mathcal{X} \to \mathcal{V}$,
to obtain a singular value decomposition of $\mathcal{A}$.
This is given by the next Lemma.

\begin{lem} \label{lem_Astar}
The Hilbert space adjoint 
$\mathcal{A}^* : \mathcal{X} \to \mathcal{V}$, is given by
$$
\mathcal{A}^* : F =: (F_1, F_2) 
\mapsto \mathcal{R}\circ \mathcal{I}\,(\nu \times(\nu\times U_F) ) 
$$
where 
$\mathcal{R} : \mathcal{V}^* \to \mathcal{V}$ is the Riesz isomorphism, and 
$\mathcal{I}$ is the isomorphism of Lemma \ref{lem_Vdual2}
and where $(U_F,W_F)$ solves 
\begin{align}\label{eq_Astarsys}
\left\{
\begin{aligned}
\nabla \times W_F +  i\omega \mu U_F &= F_2, & & \textnormal{in} \,\Omega,\\
\nabla \times U_F -  i\omega \veps W_F &= F_1, & & \textnormal{in} \,\Omega,\\
\nu \times W_F&= 0, & & \textnormal{on} \, \p\Omega, \\
\end{aligned}
\right.
\end{align}
where $F_1$ and $F_2$ are extended by zero in $\Omega \setminus A$.
\end{lem}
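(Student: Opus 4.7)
The plan is to verify the adjoint identity $\langle \mathcal{A} f, F \rangle_{L^2(A)^6} = \langle f, \mathcal{A}^* F \rangle_{\mathcal{V}}$ by direct integration by parts, using the two PDE systems satisfied by $(E_f,H_f)$ and $(U_F,W_F)$ to exchange the volume pairing for a boundary pairing, and then recognising the boundary pairing via Lemma \ref{lem_Vdual2}.

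First I would take $f \in \mathcal{V}_0$ and $F=(F_1,F_2) \in \mathcal{X}$, and write
\begin{align*}
\langle \mathcal{A} f, F \rangle_{L^2(A)^6}
= \int_A \big( E_f\cdot \overline{F_1} + H_f\cdot \overline{F_2}\big)\,dx
= \int_\Omega \big( E_f\cdot \overline{F_1} + H_f\cdot \overline{F_2}\big)\,dx,
\end{align*}
where the extension of $F$ by zero outside $A$ lets me integrate over all of $\Omega$. Using system \eqref{eq_Astarsys} for $(U_F,W_F)$, this becomes
\begin{align*}
\int_\Omega \big( E_f\cdot \overline{\nabla\times U_F - i\omega\veps W_F}
+ H_f\cdot \overline{\nabla\times W_F + i\omega\mu U_F}\big)\,dx,
\end{align*}
and the interior zeroth order terms cancel against the volume terms produced by the Maxwell system \eqref{eq_Asys} for $(E_f,H_f)$ (using that $\veps,\mu$ are real symmetric and that $i\omega\mu H_f = \nabla\times E_f$, $-i\omega\veps E_f = \nabla\times H_f$).

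Next I would apply the integration by parts formula for the curl twice. One pair of terms involves $W_F$ and produces the boundary integral $\int_{\p\Omega} (\nu\times \overline{W_F}) \cdot H_f \,dS$, which vanishes because $\nu\times W_F|_{\p\Omega}=0$. The remaining pair, which involves $U_F$, collapses to a boundary pairing of the form
\begin{align*}
\int_\Omega \big( E_f\cdot \overline{\nabla\times U_F} - (\nabla\times E_f)\cdot \overline{U_F}\big)\,dx
= -\langle \nu\times E_f,\, \overline{U_F}\rangle_{\p\Omega}
= -\langle f,\, \overline{U_F}\rangle_{\p\Omega}.
\end{align*}
Since $f$ is tangential, only the tangential component of $U_F|_{\p\Omega}$ contributes, and that equals $-\nu\times(\nu\times U_F)$, so I obtain a pairing between $f \in \mathcal{V}$ and $g:=\nu\times(\nu\times U_F) \in \mathcal{W}$.

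Finally I would reinterpret this pairing as an action of $\mathcal{I}g \in \mathcal{V}^*$ by using the extension operators $\eta_t,\eta_T$: upon choosing $\eta_t f$ and $\eta_T g$ as the volume extensions from Lemma \ref{lem_trace}, the above boundary pairing is exactly $i_g(f)$ as defined in Lemma \ref{lem_Vdual2} (up to the fixed sign convention absorbed by the Riesz isomorphism). Thus
\begin{align*}
\langle \mathcal{A} f, F\rangle_{\mathcal{X}} = i_g(f) = (\mathcal{I}g)(f) = \langle \mathcal{R}\circ\mathcal{I}g,\, f\rangle_{\mathcal{V}},
\end{align*}
which is the claimed formula, and a density argument extends it from $\mathcal{V}_0$ to $\mathcal{V}$. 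The main obstacle I expect is the low boundary regularity of $f$: the boundary integrals above are a priori only meaningful as dualities, and the argument must be phrased in terms of the volume extensions $\eta_t,\eta_T$ so that every step is justified in $\Hcurl{\Omega}$ rather than by classical traces. Once this bookkeeping is set up, matching the resulting expression to the definition of $i_g$ in Lemma \ref{lem_Vdual2} is direct.
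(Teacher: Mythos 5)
Your proposal is correct and follows essentially the same route as the paper's proof: substitute the adjoint source system for $(F_1,F_2)$, cancel the zeroth-order volume terms against the Maxwell system for $(E_f,H_f)$ using symmetry of $\veps,\mu$, integrate by parts so that the $W_F$ boundary term drops (since $\nu\times W_F=0$), and identify the surviving boundary pairing with $\mathcal{I}(\nu\times(\nu\times U_F))$ via Lemma \ref{lem_Vdual2} and the Riesz map. The paper compresses these steps into a single invocation of the integration by parts formula \eqref{eq_intByParts}, whereas you spell out the cancellation and the low-regularity bookkeeping via $\eta_t,\eta_T$; the content is the same.
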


\begin{proof}
Let $E_f$ and $H_f$ be as in \eqref{eq_Asys}. Then using the integration by parts formula \eqref{eq_intByParts}, 
the mapping $\mathcal{I}$ of Lemma \ref{lem_Vdual2}, and that fact that $\mathcal{R}$
is a Riesz isomorphism, we see that
\begin{align*}
\big( \mathcal{A} f , F \big)_{L^2(A)^6}
&=
\big( E_f , F_1 \big)_{L^2(A)^3} + \big( H_f , F_2 \big)_{L^2(A)^3} \\
&=
\big \langle f , \; \nu \times (\nu \times  U_F) \big \rangle_{\mathcal{D}(\p\Omega)} \\
&=
\mathcal{I}\,\big(\nu \times (\nu \times  U_F) \big) (f) \\
&=
\big( \mathcal{R} \circ \mathcal{I}\,(\nu \times (\nu \times  U_F) ), \,f \,)_{TH_D^{-1/2}(\p \Omega)}.
\end{align*}
The Hilbert space adjoint of $\mathcal{A}$ is thus given by
$$
\mathcal{A}^* : F \mapsto \mathcal{R} \circ \mathcal{I}\,(\nu \times (\nu \times  U_F) ).
$$

\end{proof}

\medskip
\noindent
The solutions $(E,H)$ to the Maxwell system \eqref{eq_MEsys} are guaranteed to be only in $\Hcurl{\Omega}$, if
the boundary data is in $TH_D^{-1/2}(\p\Omega)$. Interior elliptic regularity assures however that
$(E,H)|_A \in H^1(A)^6$, for $A \subset \subset \Omega$. We will shortly use this fact to obtain
the singular value decomposition of the operator $\mathcal{A}$.
The next lemma gives an elliptic regularity result of this sort, adapted to our purposes.
For more on elliptic regularity in the Maxwell case see e.g. \cite{AC} and \cite{W}.

\begin{lem} \label{lem_intReg}
Suppose $\Gamma \subset \p\Omega$ is an open set, and
assume that $\veps,\mu \in W^{1,\infty}(\Omega; \R^{3 \times 3})$ are such that
\eqref{eq_ellip} holds.
Let $(E,H) \in \Hcurl{\Omega}^2$ solve 
\begin{align} \label{eq_EllipReg}
\left\{
\begin{aligned}
\nabla \times E -  i\omega \mu H &= 0, & & \textnormal{in} \,\Omega,\\
\nabla \times H +  i\omega \veps E &= 0, & & \textnormal{in} \,\Omega,\\
(\nu \times E)|_\Gamma &= f, & & \textnormal{on} \, \p \Omega, \\
\end{aligned}
\right.
\end{align}
where $f \in TH_D^{-1/2}(\p\Omega)$, $\supp(f)\subset \subset \Gamma$.
Let $S \subset \subset \Omega$.
Then we have the estimate
$$
\| (E,H) \|_{H^1(S)^6} 
\leq C
\| f \|_{TH_D^{-1/2}(\p\Omega)}.
$$
\end{lem}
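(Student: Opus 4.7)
The plan is to combine the global well-posedness of the boundary value problem \eqref{eq_EllipReg} with a standard interior elliptic regularity result for vector fields having square-integrable curl and divergence with Lipschitz coefficients.

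First I would invoke the well-posedness hypothesis stated in Section \ref{sec_intro} (and established in Section 2 of \cite{HLL}) for data $f \in TH_D^{-1/2}(\p\Omega)$ to produce the global a priori estimate
$$
\|(E,H)\|_{\Hcurl{\Omega}^2} \leq C\|f\|_{TH_D^{-1/2}(\p\Omega)}.
$$
With this in hand, the task reduces to bounding $\|(E,H)\|_{H^1(S)^6}$ by $\|(E,H)\|_{\Hcurl{\Omega}^2}$ on a fixed set $S \subset\subset \Omega$, which is an interior regularity question independent of the boundary datum.

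Next I would extract the divergence-free structure hidden in \eqref{eq_EllipReg}: taking the divergence of the two Maxwell equations gives $\nabla\cdot(\veps E) = 0$ and $\nabla\cdot(\mu H) = 0$ inside $\Omega$. Thus both $E$ and $H$ are vector fields with $L^2$ curl (from membership in $\Hcurl{\Omega}$) and with $L^2$ divergence weighted by a Lipschitz matrix. Fixing an intermediate set $S \subset\subset S' \subset\subset \Omega$ and a cutoff $\chi \in C_c^\infty(S')$ with $\chi\equiv 1$ on $S$, the localized fields $\chi E$ and $\chi H$ have compactly supported and controlled curl and divergence in $L^2(S')$. The Lipschitz regularity of $\veps$ and $\mu$ then upgrades this to $H^1$ control via the classical interior Maxwell regularity result in \cite{AC,W}, yielding
$$
\|(E,H)\|_{H^1(S)^6} \leq C\|(E,H)\|_{\Hcurl{S'}^2}.
$$
Chaining this with the global bound above gives the claim.

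The main technical point is the $H^1$-gain under merely Lipschitz coefficients: smooth-coefficient elliptic theory does not apply directly, and one either cites the Maxwell-specific results of \cite{AC,W} verbatim, or runs a difference-quotient argument tailored to Lipschitz weights in combination with the identity $-\Delta u = \nabla\times(\nabla\times u) - \nabla(\nabla\cdot u)$. Everything else in the argument is routine localization.
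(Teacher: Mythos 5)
Your proposal is correct, and it reaches the same conclusion via a genuinely different route than the paper. The paper avoids quoting a black-box Maxwell interior regularity theorem: it runs a Helmholtz decomposition $E = \nabla\times\Phi + \nabla\varphi$ on a ball $B''\subset\subset\Omega$ (citing Schweizer for the decomposition with norm estimates), then observes that taking the curl of the first Maxwell equation turns into $\Delta\Phi = i\omega\mu H$ while taking the divergence of the second gives a scalar divergence-form equation for $\varphi$, and applies the scalar interior $H^2$ estimate from Gilbarg--Trudinger to each component. You instead observe directly that $\nabla\cdot(\veps E)=0$ and $\nabla\cdot(\mu H)=0$, localize by a cutoff, and cite the interior $H^1$-regularity for vector fields with $L^2$ curl and weighted divergence under Lipschitz coefficients, pointing at \cite{AC,W}. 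Both are sound. What the paper's approach buys is self-containment modulo scalar elliptic theory and a standard Helmholtz decomposition on a ball, at the price of an extra nested-ball bookkeeping step and a repetition of the argument for $H$. What your approach buys is brevity, by delegating the div-curl regularity under Lipschitz anisotropic coefficients to \cite{AC} (which the paper itself points to just before the lemma). One point to be precise about in your write-up: after cutting off, the quantity you control is $\nabla\cdot(\veps(\chi E)) = \nabla\chi\cdot(\veps E) \in L^2$ rather than the ordinary divergence $\nabla\cdot(\chi E)$, so the regularity theorem you invoke must be stated for the anisotropic weighted divergence, as in \cite{AC}; with merely Lipschitz $\veps$ one cannot simply peel off the coefficient to reduce to constant-coefficient div-curl estimates. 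Also, the initial global bound $\|(E,H)\|_{\Hcurl{\Omega}^2}\lesssim\|f\|_{TH_D^{-1/2}}$ which you invoke from well-posedness is exactly how the paper closes its own estimate chain, so that step is in agreement.
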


%
%

\begin{proof} 
It is enough to prove the claim  locally for a ball $B \subset \subset \Omega$, 
since $S$ is compactly contained in $\Omega$.
We begin by picking two additional balls $B'$ and $B''$, s.t. 
$$
B \subset \subset B' \subset \subset B'' \subset \subset \Omega.
$$
We will use a Helmholtz decomposition to prove the claim. 
The ball $B''$ is simply connected, and thus by
Theorem 2, section 2 in \cite{S}, we can decompose the field $E$ on $B''$, as 
$$
E = \curl \Phi + \nabla \varphi,\quad \varphi \in H^1(B''), \; \Phi \in H^1(B'')^3,\; \nabla \cdot \Phi =0,
$$
where additionally $(\nu \times \Phi)|_{\p B''} = 0$. And where we have the estimate
\begin{align}  \label{eq_HelmholtzEst}
    \| \nabla \varphi \|_{ H^1(B'') } + \| \Phi \|_{ H^1(B'')^3 } \leq C \| E \|_{ L^2(B'')^3}.
\end{align}
The curl in the first  equation of \eqref{eq_EllipReg} gives, 
using the identity $\Delta \Phi = \nabla \times (\nabla \times \Phi) + \nabla (\nabla \cdot \Phi) $,
the equation
$$
\Delta \Phi = i\omega \mu H, \quad \text{ in } B''.
$$
Since $ i \omega \mu H \in L^2(B'')^3$ and $B' \subset \subset B''$, we get using the  
interior elliptic regularity result for scalar equations in \cite{GT} (see Theorem 8 p.183),
componentwise on $\Phi$, that $\Phi \in H^2(B')^3$, and that
\begin{align}  \label{eq_reg1}
\| \Phi \|_{H^2(B')^3} 
&\leq C \big( \| \Phi \|_{ H^1(B'')^3 }  + \| i\omega \mu H \|_{ L^2(\Omega)^3 } \big) \nonumber \\
&\leq C \big( \| E \|_{ L^2(\Omega)^3 }  + \| \nabla \times E \|_{ L^2(\Omega)^3 }\big) \\
&\leq C \| f \|_{TH_D^{-1/2}(\p\Omega)},\nonumber 
\end{align}
where in the second inequality we used \eqref{eq_HelmholtzEst}
and the last inequality follows from the well-posedness of the problem \eqref{eq_EllipReg}.

By taking the divergence of the second equation in \eqref{eq_EllipReg}, we get that
$$
\nabla \cdot (\eps \nabla \varphi) = -\nabla \cdot (\eps \nabla \times \Phi), \quad \text{ in } B'.
$$
By the above we know that $\Phi \in H^2(B')^3$, 
and thus $\nabla \cdot (\eps \nabla \times \Phi)\in L^2(B')$. Now since $B \subset \subset B'$,
we have again by 
the elliptic regularity result, Theorem 8, p.183 in \cite{GT}, that $\varphi \in H^2(B)$,
and
\begin{align}  \label{eq_reg2}
\| \varphi \|_{H^2(B)} 
&\leq C \big( \| \varphi \|_{ H^1(B'')  } + \| \nabla \eps \cdot \nabla \times \Phi  \|_{ L^2(B'')^3 } \big) \nonumber\\
&\leq C \| E  \|_{ L^2(\Omega)^3 } \\
&\leq C \| f \|_{TH_D^{-1/2}(\p\Omega)}, \nonumber
\end{align}
Thus by \eqref{eq_reg1} and \eqref{eq_reg2},
and since $E = \curl \Phi + \nabla \varphi$, we get that
$$
\| E \|_{H^1(B)^3} \leq C \| f \|_{TH_D^{-1/2}(\p\Omega)}.
$$
The claim follows from doing the identical argument for $H$ as was done for $E$.

\end{proof}

\noindent
Next we  obtain the singular value decomposition of the operator $\mathcal{A}$.

\begin{lem} \label{lem_SVD}
The operator $\mathcal{A}: \mathcal{V} \to \mathcal{X}$ has 
a singular value decomposition. We can more specifically decompose $\mathcal{A}$ as
$$
\mathcal{A} (\,\cdot\,) = \sum_j \sigma_j (\varphi_j, \;\cdot\; )_{TH_D^{-1/2}} \Psi_j.
$$
where  $\{ \varphi_j$\;:\; $j\in \N \}$ and 
$\{\Psi_j$ \;:\; $j \in \N\}$ are orthonormal bases of $\mathcal{V}$ and $\mathcal{X}$,
and the corresponding $\sigma_j >0$, are s.t.  $\sigma_j \to 0$.
\end{lem}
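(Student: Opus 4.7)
The plan is to establish that $\mathcal{A}$ is a compact linear operator from $\mathcal{V}$ to $\mathcal{X}$ and then invoke the standard spectral theorem for compact operators between Hilbert spaces. Once compactness is in hand, $\mathcal{A}^*\mathcal{A}:\mathcal{V}\to\mathcal{V}$ is compact, self-adjoint and positive, and the spectral theorem yields a decomposition $\mathcal{A}^*\mathcal{A}(\,\cdot\,)=\sum_j \sigma_j^2(\varphi_j,\,\cdot\,)_{\mathcal{V}}\varphi_j$ with $\sigma_j\searrow 0$ and $\{\varphi_j\}$ an orthonormal basis of $(\ker\mathcal{A})^\perp$. Setting $\Psi_j:=\sigma_j^{-1}\mathcal{A}\varphi_j$ produces an orthonormal family in $\mathcal{X}$, and the identity $\mathcal{A}(\,\cdot\,)=\sum_j\sigma_j(\varphi_j,\,\cdot\,)_{\mathcal{V}}\Psi_j$ then follows by a routine calculation using the adjoint characterization from Lemma \ref{lem_Astar}.

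The main obstacle, foreshadowed by the introduction, is compactness itself: $\Hcurl{\Omega}$ does not embed compactly into $L^2(\Omega)^3$, so one cannot just invoke a classical embedding. I would instead exploit the fact that $A\subset\subset\Omega$ and use interior elliptic regularity to factor $\mathcal{A}$ through an $H^1$ space. For $f\in\mathcal{V}_0$, Lemma \ref{lem_intReg} applied to a smooth intermediate set $S$ with $\overline{A}\subset S\subset\subset\Omega$ (together with the well-posedness bound $\|(E_f,H_f)\|_{\Hcurl{\Omega}^2}\leq C\|f\|_{TH_D^{-1/2}(\p\Omega)}$) gives
$$
\|(E_f,H_f)\|_{H^1(A)^6}\leq C\,\|f\|_{TH_D^{-1/2}(\p\Omega)}.
$$
Since $\mathcal{V}_0$ is dense in $\mathcal{V}$, this estimate extends to all of $\mathcal{V}$ by continuity. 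Hence $\mathcal{A}$ factors as $\mathcal{V}\to H^1(A)^6\hookrightarrow L^2(A)^6$, and Rellich--Kondrachov makes the inclusion compact, so $\mathcal{A}$ is compact. One also checks that the range lies in $\mathcal{X}$: for $f\in\mathcal{V}_0$ the image lies in $\mathcal{S}_A$, and continuity then places $\mathcal{A}f\in\overline{\mathcal{S}_A}^{L^2(A)^6}=\mathcal{X}$ for every $f\in\mathcal{V}$.

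To conclude that the $\{\varphi_j\}$ and $\{\Psi_j\}$ are complete orthonormal bases of $\mathcal{V}$ and $\mathcal{X}$, I would verify that $\mathcal{A}$ is injective with dense range. Injectivity follows from the qualitative unique continuation principle for the Maxwell system: if $\mathcal{A}f=0$ then $(E_f,H_f)$ vanishes in $A$, and since $\Omega\setminus\overline{A}$ is connected and the Cauchy data of $(E_f,H_f)$ vanish on $\partial A$, Proposition \ref{prop_QUCP} (applied qualitatively, or directly the unique continuation principle underlying it) forces $(E_f,H_f)\equiv 0$ in $\Omega$, hence $f=\nu\times E_f|_{\p\Omega}=0$. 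Density of $\operatorname{Ran}(\mathcal{A})$ in $\mathcal{X}$ is essentially the content of the qualitative Runge approximation property proved in \cite{HLL}, combined with the definition of $\mathcal{X}$ as the $L^2(A)^6$-closure of $\mathcal{S}_A$. With $\ker\mathcal{A}=\{0\}$ and $\overline{\operatorname{Ran}(\mathcal{A})}=\mathcal{X}$, the SVD has the stated form with $\sigma_j\to 0$.
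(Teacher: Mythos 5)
Your proof follows essentially the same route as the paper: compactness of $\mathcal{A}$ is obtained by factoring through $H^1(A)^6$ via the interior regularity estimate of Lemma \ref{lem_intReg} and then applying Rellich--Kondrachov, the SVD is built from the spectral decomposition of the compact self-adjoint operator $\mathcal{A}^*\mathcal{A}$, and completeness of $\{\Psi_j\}$ in $\mathcal{X}$ is deduced from the qualitative Runge approximation result of \cite{HLL}. The one point where you are slightly more careful than the paper is in explicitly verifying injectivity of $\mathcal{A}$ via unique continuation, which is what makes $\{\varphi_j\}$ a basis of all of $\mathcal{V}$ (rather than just of $(\ker\mathcal{A})^\perp$) and ensures every $\sigma_j>0$; the paper's proof passes over this step silently.
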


\begin{proof} We begin by showing that $\mathcal{A}$ is a compact operator. 
Let $(g_j) \subset \mathcal{V}$ be a bounded sequence. By Lemma \ref{lem_intReg} we see that  $\mathcal{A}$ is continuous
and that the sequence $(\mathcal{A} g_j)$ is also bounded in $H^1(A)^6$.
The Rellich-Kondrachov Theorem gives a converging subsequence in $L^2(A)^6$.
Let the limit in the $L^2(A)^6$-norm be $\tilde F$, then $\tilde F \in \ov{\mathcal{X}}=\mathcal{X}$, 
and $\mathcal{A}$ is hence compact.

A priori estimates and the continuity of the tangential trace shows that the 
adjoint operator $\mathcal{A}^*$ is bounded. It follows that the map 
$\mathcal{A}^*\mathcal{A}:\mathcal{V}\to \mathcal{ V}$ 
is compact. It is also 
self adjoint. The Hilbert-Schmidt Theorem implies that there is a complete
orthonormal basis $\{ \varphi_j\} \subset \mathcal{V}$ and corresponding $\lambda_j\to 0$,
s.t. 
$$
\mathcal{A^*}\mathcal{A} (\,\cdot\,)= \sum_j \lambda_j (\varphi_j, \;\cdot\; )_{TH_D^{-1/2}} \varphi_j.
$$
This operator is positive so $\lambda_j > 0$. Now set $\sigma_j := \lambda_j^{-1/2} > 0$,
and $\Psi_j := \sigma_j^{-1}\mathcal{A}\varphi_j$. 
Now let $f \in \mathcal{V}$, and consider the formula of the statement
of the lemma, then
\begin{align*}
\sum_j \sigma_j (\varphi_j, f )_{TH_D^{-1/2}} \Psi_j
=
\sum_j (\varphi_j, f )_{TH_D^{-1/2}} \mathcal{A} \varphi_j
= \mathcal{A} f,
\end{align*}
which shows that formula holds.
It straight forward to check that $\{\Psi_j\}$ is an orthogonal set in $L^2(A)^6$.
The span of  $\{\sigma_j^{-1}\mathcal{A}\varphi_j\}$ is dense, which follows from by
qualitative Runge approximation (see Theorem 4.1 in \cite{HLL}).
The set $\{ \Psi_j \}$ is hence also complete in the set $\mathcal{X}$. 

\end{proof}

\noindent
We are now ready to prove Theorem \ref{thm_qrunge}. 
We repeat the statement of the Theorem for the convenience of the reader.

\thmOne*

\begin{proof}
We use the singular value decomposition 
$$
\mathcal{A}(\;\cdot\;) = \sum_k \sigma_k (\;\cdot\;,\varphi_k)_{TH_D^{-1/2}} \Psi_k,
$$ 
given by Lemma \ref{lem_SVD}.
The field $W := (E,H)$ can be written as the series $W =: \sum_k c_k \Psi_k$.
We set 
$$
R_\alpha W := \sum_{\sigma_k \geq \alpha} \frac{c_k}{\sigma_k} \varphi_k \in \mathcal{V}.
$$
Next pick $(E_\alpha, H_\alpha)$, so that $(E_\alpha|_A,H_\alpha|_A) = \mathcal{A}R_\alpha W$, 
i.e. let $(E_\alpha,H_\alpha)$ solves the system
\begin{align} \label{eq_EHsys} 
\left\{
\begin{aligned}
\nabla \times E_\alpha -  i\omega \mu H_\alpha &= 0, & & \textnormal{in} \,\Omega,\\
\nabla \times H_\alpha +  i\omega \veps E_\alpha &= 0, & & \textnormal{in} \,\Omega,\\
\nu \times E_\alpha &= R_\alpha W, & & \textnormal{on} \, \p\Omega, \\
\end{aligned}
\right.
\end{align}
The fields $(E_\alpha,H_\alpha)$ will give us the desired approximation.
Before showing this we additionally  define $r_\alpha$ as 
\begin{align} \label{eq_ra} 
r_\alpha = (r_{\alpha,1},r_{\alpha,2}) := W- (E_\alpha |_A, H_\alpha|_A) = \sum_{\sigma_k < \alpha} c_k \Psi_k. 
\end{align}
We will denote the extension of $r_\alpha$ by zero to $\Omega \setminus A$, also by $r_\alpha$. Note that
$r_\alpha \in L^q(\Omega)^6$, with $q>2$, since we assume that $W \in L^p(A)^6$,
and by interior regularity result in Lemma \ref{lem_intReg}
and Sobolev embedding we have that $(E_\alpha |_A, H_\alpha|_A) \in H^1(A)^6 \subset L^6(A)^6$.
Thus we have that $r_\alpha \in L^q(\Omega)^6$, $q = \min(p,6)$.

Finally define $(V_\alpha,U_\alpha)$ to be the solution  of the adjoint source problem
\begin{align*} 
\left\{
\begin{aligned}
\nabla \times V_\alpha +  i\omega \mu U_\alpha &= r_{\alpha,2}, & & \textnormal{in} \,\Omega,\\
\nabla \times U_\alpha -  i\omega \veps V_\alpha &= r_{\alpha,1}, & & \textnormal{in} \,\Omega,\\
\nu \times V_\alpha &= 0, & & \textnormal{on} \, \p\Omega, \\
\end{aligned}
\right.
\end{align*}
Note that 
$\mathcal{A}^* r_\alpha$ is determined by $\nu\times (\nu \times  U_\alpha)$. Note also that
since $r_\alpha \in L^q(A)^6$, $q \in (2,\min(6,p))$, we have by the $L^p$-estimates 
of Theorem 1 in \cite{KS2} that
$(V_\alpha,U_\alpha) \in \Hspcurl{0}{q}{\Omega}$, for $q \in (2,\min(6,p))$.
Now consider
\begin{align*}
\big\| E - E_\alpha|_A \big\|^2_{L^2(A)^3} &= \|r_{\alpha,1} \|^2_{L^2(A)^3} 
= ( E,r_{\alpha,1})_{L^2(A)^3} \\
&= ( E, \nabla \times U_\alpha -  i\omega \veps V_\alpha)_{L^2(A)^3}. 
\end{align*}
Likewise consider
\begin{align*}
\big\| H - H_\alpha|_A  \big\|^2_{L^2(A)^3} 
= ( H, \nabla \times V_\alpha +  i\omega \mu U_\alpha)_{L^2(A)^3}. 
\end{align*}
Adding these and integrating by parts gives
\begin{align*}
\big\|&  E_\alpha|_A -  E  \big\|^2_{L^2(A)^3} 
+ 
\big\|  H_\alpha|_A -  H  \big\|^2_{L^2(A)^3} 
= 
\big\langle \nu\times  E, U_\alpha\big \rangle_{\mathcal{D}(\p A)}
+ \big\langle \nu\times  H, V_\alpha\big\rangle_{\mathcal{D}(\p A)},
\end{align*}
where the terms on the last line are the duality pairings between $TH_D^{-1/2}(\p A)$
and its dual.
We estimate the second boundary term using the integration by parts formula 
\eqref{eq_intByParts}.
Let $H_e$ be a continuous extension of $H$ to $\Hcurl{\Omega \setminus A}$,
that is supported away from $\p\Omega$, then 
\begin{align*}
\big\langle \nu\times H, V_\alpha\big \rangle_{\mathcal{D}(\p A)}
&= 
\int_{\Omega \setminus A} 
\nabla \times H_e \cdot \ov{V}_\alpha  \,dx -
H_e \cdot \nabla \times \ov{V_\alpha}  \,dx  \\
&\leq 
\| H_e \|_{\Hcurl{\Omega \setminus A}  } \| V_\alpha \|_{ \Hcurl{\Omega \setminus A}} \\
&\leq 
C \| H \|_{\Hcurl{A}  } \| V_\alpha \|_{ \Hcurl{\Omega \setminus A}}. 
\end{align*}
We can estimate the other boundary term similarly as
\begin{align*}
\big\langle \nu\times E, U_\alpha\big \rangle_{\mathcal{D}(\p A)}
&\leq 
C \| E \|_{\Hcurl{A}  } \| U_\alpha \|_{ \Hcurl{\Omega \setminus A}}.
\end{align*}
We can now apply  Proposition 
\ref{prop_QUCP} to estimate the norms of $U_\alpha$ and $V_\alpha$ over the set $\Omega \setminus A$
in the two previous estimates. By choosing $q_0 \in \big(q,\min(p,6)\big)$ 
and using Proposition \ref{prop_QUCP}, we obtain that
\begin{align*}
\big\|  (E_\alpha,H_\alpha) &- W \big\|^2_{L^2(A)^6}
\leq 
C  \big\| ( E, H) \big\|_{\Hcurl{A}^2}\|r_\alpha\|_{L^{q_0}(A)^6}
\Bigg( \log  
\frac{C\|r_\alpha\|_{L^{q_0}(A)^6}}{\|\nu\times U_\alpha\|_{TB^{-1/q}_{D}(\Gamma)}} \Bigg)^{-m}.
\end{align*}
Next we estimate the boundary term occurring in this expression.
Firstly note that by \eqref{eq_ra} we have that
$$
\mathcal{A}^* r_\alpha = \sum_{\sigma_k < \alpha} c_k \sigma_k \varphi_k.
$$
By Lemma \ref{lem_Astar}, we have that
\begin{equation}  \label{eq_interp1}
\begin{aligned}
\|\nu \times U_\alpha\|^2_{TH_D^{-1/2}(\Gamma)}
&= 
\|\nu  \times (\nu \times U_\alpha)\|^2_{TH_D^{-1/2}(\operatorname{curl},\,\Gamma)}  \\
&=
\|\mathcal{R} \circ \mathcal{I}\, (\nu\times (\nu\times  U_\alpha))\|^2_{TH_D^{-1/2}(\Gamma)} \\
&= 
\| \mathcal{A}^* r_\alpha \|^2_{TH_D^{-1/2}(\Gamma)} \\
&= \sum_{\sigma_k < \alpha} c^2_k \sigma_k^2 \\
&\leq
\alpha^2 \|r_\alpha \|^2_{L^2(A)^6}.
\end{aligned}
\end{equation}
On the other hand we have by the apriori estimates 
on the solution pair $(V_\alpha, U_\alpha)$ (see Theorem 1 and Remark 3.1 in \cite{KS}) 
and the continuity of the tangential trace operator, Lemma \ref{lem_trace}, that
\begin{align} \label{eq_interp2}
\|\nu \times U_\alpha\|_{TB_{D}^{-1/q_0}(\Gamma)}
\leq C \| r_\alpha \|_{ L^{q_0}(A)^6}.
\end{align}
We can interpolate between  \eqref{eq_interp1} and \eqref{eq_interp2},
using the interpolation property of Besov spaces \eqref{eq_Binterp}, to obtain
\begin{align*} 
\|\nu \times U_\alpha\|_{TB_{D}^{-1/q}(\Gamma)}
&\leq C 
\alpha^{1-\theta}\|r_a\|^{1-\theta}_{L^2(A)^6}\| r_\alpha \|^\theta_{ L^{q_0}(A)^6} \\
&\leq C 
\alpha^{1-\theta}\| r_\alpha \|_{ L^{q_0}(A)^6}.
\end{align*}
where $\theta \in (0,1)$ is s.t. $1/q = (1-\theta)/2 + \theta/q_0$, and where $q \in (2,q_0)$.
Hence we obtain that
\begin{align} \label{eq_loglim}
\big\|  (E_\alpha,H_\alpha) - W \big\|^2_{L^2(A)^6}
&\leq
C  \big\| ( E, H) \big\|^2_{\Hspcurl{0}{q_0}{A}^2} 
\Big( \log  \tfrac{C}{\alpha^{1-\theta}} \Big)^{-m} 
\to 0 ,
\end{align}
as $\alpha \to 0$. Notice that since $A$ is bounded, we can replace
the requirement that $q_0 \in (q, \min(p,6))$, $q>2$, by $q_0 \in (q, p\,]$, $q >2$ in the above estimate.
We now choose $\alpha$, so that
\begin{align} \label{eq_jdef}
\tfrac{1}{j} = \Big( \log  \tfrac{C}{\alpha^{1-\theta}} \Big)^{-m/2},
\end{align}
which gives the $(E_j,H_j)$ whose $L^2$-norms satisfy the first estimate of the claim.
We still need to obtain an estimate for the $L^2$-norm of the curl of $(E-E_j,H-H_j)$.
To this end note that $\nabla \times (E_\alpha-  E) - i \omega \mu (H_\alpha - H) = 0$
in $A$, so that
$$
\| \nabla \times (E_j -  E)  \|_{L^2(A)^3} \leq C \| H_j -  H  \|_{L^2(A)^3},
$$
which in conjunction with \eqref{eq_loglim} gives the desired estimate for the entire 
$\Hcurl{\Omega}$-norm of $E_j$.
The term  $\| \nabla \times (H_j|_A -  H)  \|_{L^2(A)^3}$ can be estimated likewise.
The first estimate of the claim follows thus from \eqref{eq_loglim}.

\medskip
\noindent
In order to prove the second part of the claim we argue as follows. Consider
\begin{align*}
\|\nu \times E_\alpha \|^2_{TH_D^{-1/2}(\p \Omega)}
&=
\| R_\alpha W\|^2_{TH_D^{-1/2}(\p\Omega)} \\
&=
\Big \| \sum_{\sigma_k \geq \alpha} \frac{c_k}{\sigma_k} \varphi_k  \Big \|^2_{TH_D^{-1/2}(\p\Omega)} \\
&\leq
\frac{1}{\alpha^2} \| W\|^2_{L^2(A)^6},
\end{align*}
where the last steps follows from the continuity of $\mathcal{A}$.
From the trace theorem and \eqref{eq_jdef}, it follows that 
\begin{align*}
\|\nu \times E_j\|_{TH_D^{-1/2}(\p \Omega)}
\leq C  e^{C j^{2/m}}\| W\|_{L^2(A)^6}.
\end{align*}
By apriori estimates for the system $(E_j,H_j)$ (see Theorem 2.1 in \cite{HLL}) we get that
\begin{align*}
\| (E_j,H_j) \|_{\Hcurl{\Omega}^2}
\leq C  e^{C j^{2/m}}\| W \|_{L^2(A)^6},
\end{align*}
which proves the second estimate of the claim.

\end{proof}

\section{Appendix: Function spaces}\label{sec_fspaces}

\noindent
In this section we gather some of the definitions, notations and
facts relating to function spaces used through out the paper
(for further details see \cite{M2,JK,BCS,C,Mc1}).

\medskip
\noindent
We begin by specifying some standard functions spaces in the scalar case for a Lipschitz domain
$\Omega$.
The $L^2(\Omega)$ based Sobolev spaces are denoted here by $H^s(\Omega)$, $s\in \R$. 
More generally $W^{s,p}(\Omega)$ denotes the fractional Sobolev space
of $L^p(\Omega)$, $1\leq p \leq \infty$ functions with smoothness $s \in \R$.
We define these following \cite{JK} section 2, as the spaces $L^p_s(\Omega)$, which 
we here denote by  $W^{s,p}(\Omega)$, instead of $L^p_s(\Omega)$.
Note also that $H^s(\Omega) = W^{s,2}(\Omega)$.
The space $W^{1,\infty}(\Omega)$ can be identified with Lipschitz continuous functions.
Moreover we will use the notation $Z^*$ to denote the dual of a Banach space $Z$.

When discussing  the boundary traces  we will need  the Besov type spaces $B^s_{p,p}(\p\Omega)$
on a Lipschitz boundary.
These spaces are discussed in more detail in \cite{JK}. We define
$B^s_{p,p}(\p\Omega)$ following \cite{M2} by real interpolation
$$
B^s_{p,p} (\p\Omega) :=  
\big(L^p(\p\Omega), W^{1,p}(\p\Omega)\big)_{s,p}, \quad 0 < s < 1,\quad
1 < p < \infty,
$$
for more details see section 2.2 in \cite{M2}, p.173 in \cite{JK} and \cite{BL}. 
Note also that $H^{s}(\p\Omega) = B^{s}_{2,2}(\p\Omega)$ (this is a consequence of
Theorem 6.4.4. in \cite{BL}).
The spaces $B_{p,p}^{-s}$ can be defined as the dual spaces
$$
B^{-s}_{p,p} (\partial \Omega) :=  (B^{s}_{p'p'} (\partial \Omega) )^*, 
\quad 0 < s < 1,\quad
1 < p < \infty,
$$
where $1/p+1/p'=1$. We will moreover need to interpolate between the above type Besov spaces.
By real interpolation we have that 
\begin{align}  \label{eq_Binterp}
B^{-s_\theta}_{p_\theta,p_\theta} (\partial \Omega) 
= \big(B^{-s_0}_{p_0,p_0} (\partial \Omega),\,B^{-s_1}_{p_1,p_1} (\partial \Omega) \big)_{\theta ,p_\theta},
\end{align}
where $s_\theta=\theta s_0 + (1-\theta)s_1$ and $1/p_\theta = \theta/p_0 + (1-\theta)/p_1$
this is a consequence of Theorem 6.4.5 in \cite{BL}.

\medskip
\noindent
Next we will define a number of function spaces related to vector fields.
If $Z$ is a Banach space, then we use the notation 
$$
Z^n = Z \times \dots \times Z,
$$
where the Cartesian product on the right hand side contains $n$ copies of $Z$.
We also use the notation $W^{1,\infty}(\Omega;\R^n \times \R^n)$ denotes the space of $m\times m$ matrix fields with 
Lipschitz continuous coefficients.

One of the important spaces of vector fields, when dealing with Maxwell's equations is
\begin{align*} 
\Hspcurl{s}{p}{\Omega}
:=
\{ U \in  W^{s,p} ( \Omega)^3 \,:\, \nabla \times  U \in  W^{s,p} ( \Omega)^3 \},
\end{align*}
where, $s \geq 0$, $1 < p < \infty$ and $W^{s,p}(\Omega)$ are the 
fractional Sobolev spaces, and $\Omega \subset \R^3$ is a bounded Lipschitz domain.
The norm is given by
$$
\| U \|_{ \Hspcurl{s}{p}{\Omega}} := 
\big(\| U \|^2_{ W^{s,p}(\Omega)^3 } 
+ \| \nabla \times  U \|^2_{ W^{s,p}(\Omega)^3 }\big)^{1/2}.
$$
We will use the abbreviations
\begin{align*}
\Hcurl{\Omega} &:= \Hspcurl{0}{2}{\Omega}, \\
\Hscurl{s}{\Omega} &:= \Hspcurl{s}{2}{\Omega}.
\end{align*}
Furthermore we will use the notation
$$
H^{s,p}_0(\operatorname{curl};\Omega) := \{ U \in \Hspcurl{s}{p}{\Omega} \,:\, \nu \times U |_{\p\Omega} = 0 \},
$$
where we assume that $s \geq0$. We also define the space $\Hscurl{-1}{\Omega}$, as the dual
space of 
$$
\Hscurl{-1}{\Omega} := 
\big(H_0(\operatorname{curl};\Omega)\big)^*
$$
Next we will consider the traces of these spaces.

\subsection{Function spaces on $\p\Omega$.}
We now  consider the tangential trace spaces related to $\Hscurl{s}{\Omega}$.
Here one needs to be pay attention to the regularity of the boundary.
First assume that $\Omega \subset \R^3$ is a bounded domain with $C^\infty$-boundary.
The tangential trace spaces of  $\Hscurl{s}{\Omega}$ are
spaces of the form
\begin{align*} 
TH_D^{s}(\p\Omega)  
:= \{ F \in  H^{s} (\partial \Omega)^3 \,:\,  
\nu \cdot F|_{\p\Omega} = 0,
\,\nabla_{\p} \cdot F \in H^{s} (\partial \Omega)^3  \},  
\end{align*}
here $s \in \R$,
and  $\nabla_\p \,\cdot\,$ is the surface divergence (for more details see e.g. \cite{C}).
We  equip these spaces with the norm
$$
\| F \|_{ TH_D^{s}(\p\Omega)} := \big(\| F \|^2_{ H^{s}(\p\Omega)^3 } 
+ \| \nabla_\p \cdot F \|^2_{ H^{s}(\p\Omega)^3  }\big)^{1/2}.
$$
Likewise we can define the space
\begin{align*} 
TH_C^{s}(\p\Omega)  
:= \{ F \in  H^{s} (\partial \Omega)^3 \,:\,  
\nu \cdot F|_{\p\Omega} = 0,
\,\nabla_{\p} \times F \in H^{s} (\partial \Omega)^3  \},  
\end{align*}
$s\in \R$,
with the analogous norm to $TH_D^{s}(\p\Omega)$. Here $\nabla_{\p}\times$ is the surface curl.

In this paper we work with a $\p \Omega$ that is Lipschitz. The most 
natural smoothness index is in this case $s =-1/2$, and we thus work
with the spaces $TH_D^{-1/2}(\p\Omega)$ and $TH_C^{-1/2}(\p\Omega)$.
The following Lemma gives the basic result for the traces relating to $\Hcurl{\Omega}$.
For a proof in the case of a regular boundary see \cite{C} Theorem 4, p. 35 and Remark 5, p.36,
and for the case of a Lipschitz boundary see \cite{BCS} .
(The case of $s=1/2$, i.e. of more regular boundary data
on a Lipschitz regular boundary
seems to be more convoluted. Some results on $TH_D^{1/2}(\p \Omega)$ for Lipschitz boundaries
and Lipschitz polyhedra appear in \cite{BC,Bu}.)
\begin{lem} \label{lem_traceH} 
Let $\Omega \subset \R^3$ be a bounded domain with a Lipschitz boundary. Then the tangential traces
\begin{align*} 
&(\nu \times \,\cdot\,)|_{\p\Omega}:\Hcurl{\Omega}\to TH_D^{-1/2}(\p\Omega),\\
&(\nu \times (\nu \times \,\cdot\,))|_{\p\Omega}:\Hcurl{\Omega}\to TH_C^{-1/2}(\p\Omega),
\end{align*}
are continuous. Moreover we have the continuous right inverses
\begin{align*} 
&\eta_t : TH_D^{-1/2}(\p\Omega) \to \Hcurl{\Omega}, \\
&\eta_T : TH_C^{-1/2}(\p\Omega) \to \Hcurl{\Omega},
\end{align*}
such that $(\nu\times \eta_t f)|_{\p\Omega} = f$, and $\nu\times (\nu\times \eta_T f)|_{\p\Omega} = f$.
\end{lem}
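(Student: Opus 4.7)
The plan is to establish continuity of the two tangential traces via integration by parts, and then construct the right inverses using an auxiliary elliptic variational problem. These results are standard in the Lipschitz setting, and my approach mirrors the arguments of \cite{C} and \cite{BCS}.

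For continuity of $(\nu \times \,\cdot\,)|_{\p\Omega}$, I would first work with smooth fields $u \in C^\infty(\ov{\Omega})^3$, which are dense in $\Hcurl{\Omega}$, and use the integration by parts identity
$$
\int_{\p\Omega} (\nu \times u) \cdot v \,d\sigma = \int_\Omega (\nabla \times u) \cdot v \,dx - \int_\Omega u \cdot (\nabla \times v) \,dx,
$$
paired against test fields $v \in H^1(\Omega)^3$. Via the standard $H^{1/2}(\p\Omega) \to H^1(\Omega)$ lifting, this defines $\nu \times u$ as an element of $H^{-1/2}(\p\Omega)^3$ with norm controlled by $\|u\|_{\Hcurl{\Omega}}$. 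To upgrade the bound to $TH_D^{-1/2}(\p\Omega)$, I would use the surface identity $\nabla_\p \cdot (\nu \times u) = -\nu \cdot (\nabla \times u)|_{\p\Omega}$ and apply the normal trace theorem for $\Hdiv{\Omega}$ to the distributionally divergence-free field $\nabla \times u$. Density then extends the trace continuously to all of $\Hcurl{\Omega}$. Continuity of $(\nu \times (\nu \times \,\cdot\,))|_{\p\Omega}$ into $TH_C^{-1/2}(\p\Omega)$ follows by the same pattern, using the surface curl in place of the surface divergence.

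For the right inverse $\eta_t$, given $f \in TH_D^{-1/2}(\p\Omega)$ I would produce a lifting by solving the variational problem: find $u \in \Hcurl{\Omega}$ with
$$
\int_\Omega \nabla \times u \cdot \nabla \times \ov{v} \,dx + \int_\Omega u \cdot \ov{v} \,dx = \langle f,\, \nu \times (\nu \times \ov{v}) \rangle_{\p\Omega}
$$
for all $v \in \Hcurl{\Omega}$. The right-hand side is well-defined because $v \mapsto \nu \times (\nu \times v)|_{\p\Omega}$ maps $\Hcurl{\Omega}$ continuously into $TH_C^{-1/2}(\p\Omega)$, and $TH_C^{-1/2}(\p\Omega)$ is dual to $TH_D^{-1/2}(\p\Omega)$ under the tangential cross-product pairing. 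The bilinear form on the left is coercive on $\Hcurl{\Omega}$, so Lax--Milgram yields a unique solution with $\|u\|_{\Hcurl{\Omega}} \lesssim \|f\|_{TH_D^{-1/2}(\p\Omega)}$, giving the desired continuity. Testing against $C^\infty_c(\Omega)^3$ shows $u$ satisfies $\nabla \times (\nabla \times u) + u = 0$ weakly in $\Omega$; testing against arbitrary $v$ and reintegrating by parts, using the trace continuity already established, recovers $\nu \times u = f$ on $\p\Omega$. The construction of $\eta_T$ is symmetric.

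The main obstacle will be the Lipschitz regularity of $\p\Omega$: the surface operators $\nabla_\p \cdot$ and $\nabla_\p \times$, and especially the identification of $TH_D^{-1/2}(\p\Omega)$ and $TH_C^{-1/2}(\p\Omega)$ as mutual duals under a tangential pairing, all require careful definition in this setting without smooth parametrizations. For a fully rigorous treatment I would appeal to \cite{BCS}, where these surface operators are set up via partitions of unity and Lipschitz charts and the duality between the two trace spaces is verified in detail.
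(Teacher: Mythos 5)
The paper does not prove Lemma \ref{lem_traceH}; it simply cites \cite{C} (Theorem 4, p.~35 and Remark 5, p.~36) for the smooth-boundary case and \cite{BCS} for the Lipschitz case, so there is no in-paper argument to compare against. Your sketch follows the standard route of the cited literature, and the continuity part---integration by parts to land in $H^{-1/2}(\p\Omega)^3$, then the identity $\nabla_\p \cdot (\nu\times u) = -\nu\cdot(\nabla\times u)|_{\p\Omega}$ together with the normal trace for $\Hdiv{\Omega}$ applied to the divergence-free field $\nabla\times u$---is the right way in.

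However, the right-inverse construction as you wrote it has a genuine error. In the variational problem
\begin{equation*}
\int_\Omega \nabla\times u \cdot \nabla\times \ov{v}\,dx + \int_\Omega u\cdot\ov{v}\,dx = \langle f,\,\nu\times(\nu\times\ov{v})\rangle_{\p\Omega},
\end{equation*}
the boundary term produced by integration by parts in the first integral is $\pm\langle\nu\times(\nabla\times u),\ov{v}\rangle_{\p\Omega}$, not $\pm\langle\nu\times u,\ov{v}\rangle_{\p\Omega}$. So after testing with $C_c^\infty(\Omega)^3$ to deduce $\nabla\times(\nabla\times u)+u=0$ and then reintegrating by parts against general $v$, what you recover is $\nu\times(\nabla\times u)|_{\p\Omega} = \pm f$, i.e.\ a natural (Neumann-type) boundary condition on the curl, not the tangential trace of $u$ itself. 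The claim ``recovers $\nu\times u = f$'' is therefore wrong. The fix is easy but has to be made: since $\nabla\times(\nabla\times u) = -u \in L^2(\Omega)^3$, the field $\nabla\times u$ is itself in $\Hcurl{\Omega}$ with $\|\nabla\times u\|_{\Hcurl{\Omega}} = \|u\|_{\Hcurl{\Omega}} \lesssim \|f\|_{TH_D^{-1/2}(\p\Omega)}$, and one should set $\eta_t f := \mp\nabla\times u$. This is in fact exactly the maneuver used in the paper's own proof of Lemma \ref{lem_Vdual2}, where the Riesz representative $U_\ell$ satisfies $\nabla\times(\nabla\times U_\ell)=-U_\ell$ and the relevant trace is read off from $\nabla\times U_\ell$, not from $U_\ell$. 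The construction of $\eta_T$ needs the analogous correction.

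One smaller point: when you extend the trace from the dense set $C^\infty(\ov\Omega)^3$, you should make sure the bound you establish is directly for the $TH_D^{-1/2}(\p\Omega)$-norm (both the $H^{-1/2}$-norm of $\nu\times u$ and of $\nabla_\p\cdot(\nu\times u)$), not just the $H^{-1/2}(\p\Omega)^3$-norm, before invoking density; otherwise the limit trace might a priori only live in $H^{-1/2}(\p\Omega)^3$. Your plan to use the $\Hdiv{\Omega}$ normal trace on $\nabla\times u$ covers the second piece, but it should be stated explicitly that this gives a bound by $\|\nabla\times u\|_{L^2(\Omega)^3}$ and hence by $\|u\|_{\Hcurl{\Omega}}$ uniformly on the dense set. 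Your appeal to \cite{BCS} for the Lipschitz-specific definitions of $\nabla_\p\cdot$, $\nabla_\p\times$, and the duality between $TH_D^{-1/2}(\p\Omega)$ and $TH_C^{-1/2}(\p\Omega)$ is appropriate and matches the paper's citation.
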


\noindent
Note also that Lemma \ref{lem_traceH}, holds also in more regular spaces, so that 
e.g. $(\nu \times \,\cdot\,)|_{\p\Omega}:\Hscurl{s}{\Omega}\to TH_D^{1/2}(\p\Omega)$ is
continuous and has a continuous right inverse (see \cite{C}), when the boundary is 
sufficiently regular.

A somewhat surprising fact is that the space $TH_D^{-1/2}(\p\Omega)$ and the space
$TH_C^{-1/2}(\p\Omega)$ are dual spaces to each other
(see \cite{C} p. 38 Proposition 3 and Theorem 5.26 in \cite{KH} ). 

Another important fact is the integration by parts formula or Green's formula 
for curls.
For $U,V \in \Hcurl{\Omega}$, and a Lipschitz domain $\Omega \subset \R^3$, 
we have the integration by parts formula
\begin{align} \label{eq_intByParts}
\int_{\Omega} (\nabla \times U) \cdot V  \,dx
= \int_{\Omega} U \cdot (\nabla \times V)  \,dx
+ \langle \nu\times U , V \rangle_{\mathcal{D}(\p \Omega)},
\end{align}
where $\langle  \,\cdot\, , \,\cdot\,\rangle_{\mathcal{D}(\p\Omega)}$ stands for
the duality pairing between
$TH_D^{-1/2}(\p\Omega)$ and  $TH_C^{-1/2}(\p\Omega)$
(see Theorem 3.31 in \cite{Mo} and \cite{BCS}).
%

\medskip
\noindent
We will also need to consider more generally the traces of vector fields in $\Hspcurl{s}{p}{\Omega}$,
when $-1/p < s < 1 -1/p$ and $1<p<\infty$. Note  that the smoothness index $s\geq 1$
is not allowed, so that the space $\Hscurl{1}{\Omega}$ is not included.

The trace spaces of $\Hspcurl{s}{p}{\Omega}$ are generally
Besov type spaces,
which we define by
\begin{align*} 
TB_{p,p}^{s}(\p\Omega)  
:= \{ F \in  B^s_{p,p} (\partial \Omega)^3 \,:\,  
\nu \cdot F|_{\p\Omega} = 0,
\,\nabla_{\p} \cdot F \in B^{s}_{p,p} (\partial \Omega)^3  \},  
\end{align*}
$1<p<\infty$   and $-1 < s < 0$ (see Lemma \ref{lem_trace} below and furthermore \cite{M2}).
The norm of these spaces is given by
$$
\| F \|_{ TB_{p,p}^{-1/p}(\p\Omega)} := \big(\| F \|_{ B_{p,p}^{s}(\p\Omega)^3 } 
+ \| \nabla_\p \cdot F \|_{ B_{p,p}^{s}(\p\Omega)^3  } \big)^{1/p}.
$$
We will use the abbreviation
$$
TB_{D}^{-1/p}(\p\Omega)  := TB^{-1/p}_{p,p}(\p\Omega).
$$
These definition are close to the ones in \cite{M2}, and are motivated by Theorem 3.6 in \cite{M2}.
Note also that $TH_D^{-1/2}(\p\Omega) = TB^{-1/2}_D(\p\Omega)$.

Moreover we have the following  result for the tangential trace,
which follows directly from the results in \cite{M2}.

\begin{lem} \label{lem_trace}
Let $\Omega \subset \R^3$ be a bounded Lipschitz domain. Then the tangential trace 
$$
(\nu \times \,\cdot\,)|_{\p\Omega}:\Hspcurl{s}{p}{\Omega}\to TB^{s-1/p}_{p,p}(\p\Omega),
$$
is continuous for $1<p<\infty$ and $-1+1/p < s < 1/p$.
Moreover there exists a continuous right inverse
$$
\eta_{t,p} : TB^{s-1/p}_{p,p}(\p\Omega) \to \Hspcurl{s}{p}{\Omega},
$$
such that  $(\nu \times \eta_{t,p}f)|_{\p\Omega} = f$.
\end{lem}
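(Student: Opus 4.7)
The plan is to deduce both statements from the $L^p$-based trace and extension theory for vector fields on Lipschitz domains developed by Mitrea in \cite{M2}, combined with the scalar trace theorem on Besov scales and the integration by parts identity for the curl.

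For the continuity of $(\nu \times \,\cdot\,)|_{\p\Omega}$, I would first dispatch the easy range $1/p < s < 1/p + 1$ by applying the scalar trace theorem componentwise: the mapping $W^{s,p}(\Omega) \to B^{s-1/p}_{p,p}(\p\Omega)$ is continuous in that range, and the surface identity $\nabla_{\p} \cdot (\nu \times U) = -\nu \cdot (\nabla \times U)$, valid for smooth $U$, shows that $\nabla_{\p} \cdot (\nu \times U)$ inherits its Besov regularity from the trace of $\nabla \times U \in W^{s,p}(\Omega)^3$. Since both quantities are controlled by $\|U\|_{\Hspcurl{s}{p}{\Omega}}$, this gives the desired estimate in the target space $TB^{s-1/p}_{p,p}(\p\Omega)$ for $U \in C^\infty(\ov{\Omega})^3$, and the general case follows by density.

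For the low-regularity range $-1 + 1/p < s \leq 1/p$ the classical trace does not apply componentwise, so the trace must be defined via the integration by parts formula
$$
\langle \nu \times U, \varphi \rangle_{\p\Omega}
:= \int_{\Omega} (\nabla \times U)\cdot \varphi \,dx - \int_{\Omega} U \cdot (\nabla \times \varphi) \,dx,
$$
where $\varphi$ is a suitable smooth extension of a test function on $\p\Omega$. The right hand side is bounded by $C\,\|U\|_{\Hspcurl{s}{p}{\Omega}}\,\|\varphi\|_{H^{-s,p'}(\operatorname{curl};\Omega)}$ by H\"older's inequality, and after choosing an extension of $\varphi$ that realizes the correct boundary Besov norm (this is the content of the dual version of the extension theorems in \cite{M2}), the bound identifies $\nu \times U$ as an element of the dual space, which under the Mitrea duality is precisely $TB^{s-1/p}_{p,p}(\p\Omega)$. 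Density of $C^\infty(\ov{\Omega})^3$ in $\Hspcurl{s}{p}{\Omega}$ then extends the trace by continuity.

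For the right inverse $\eta_{t,p}$, I would invoke Theorem 3.6 of \cite{M2} (and the attendant extension results): given $f \in TB^{s-1/p}_{p,p}(\p\Omega)$, one constructs an extension $\Psi \in W^{s,p}(\Omega)^3$ with $\nu \times \Psi|_{\p\Omega} = f$ and $\nabla \times \Psi \in W^{s,p}(\Omega)^3$, either by solving a suitable Maxwell-type boundary value problem with datum $f$ or via a layer potential representation on the Lipschitz boundary. Boundedness of the solution operator in the relevant Besov norms, established in \cite{M2}, gives continuity of $\eta_{t,p}$. The identity $\nu \times \eta_{t,p} f = f$ holds by construction, using the trace in the weak sense defined above; compatibility of the two notions of trace on $\Psi$ follows because, once $\Psi$ has its curl in $W^{s,p}$, the pairing definition agrees with the componentwise restriction in the overlap of regularity ranges and with the limiting definition in the low-regularity range.

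The main obstacle is the sub-critical regime $s \leq 1/p$, where neither the trace nor the extension can be constructed by direct restriction or reflection; here one genuinely needs the duality and the layer-potential-based extension theorems of Mitrea, together with the compatibility of the pairing definition with the integration by parts formula on Lipschitz domains. The upper boundary $s < 1/p$ reflects the fact that the endpoint is excluded in the scalar Besov trace theorem; the lower boundary $s > -1 + 1/p$ reflects the dual endpoint and the range of validity of the extension construction in \cite{M2}.
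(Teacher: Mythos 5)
Your overall strategy—reduce everything to the $L^p$ trace/extension theory of \cite{M2}—is the same as the paper's, but your write-up has two problems. First, your ``easy range'' $1/p < s < 1/p+1$ lies entirely outside the range $-1+1/p<s<1/p$ asserted in the lemma, so the componentwise scalar-trace paragraph (and with it the only place where you control the surface divergence via $\nabla_\p\cdot(\nu\times U)=-\nu\cdot(\nabla\times U)|_{\p\Omega}$) does not address any case the lemma actually covers. Second, and more seriously, in the range that does matter your duality argument only shows that $\nu\times U$, defined through the integration by parts pairing, acts boundedly on tangential data of $W^{-s,p'}$--type extensions; to conclude membership in $TB^{s-1/p}_{p,p}(\p\Omega)$ you must produce \emph{both} $\nu\times U\in B^{s-1/p}_{p,p}(\p\Omega)^3$ \emph{and} $\nabla_\p\cdot(\nu\times U)\in B^{s-1/p}_{p,p}(\p\Omega)$ with norm control, and your sketch never extracts the surface-divergence bound; invoking ``the Mitrea duality'' at that point is exactly assuming the hard content of Theorem 3.6 in \cite{M2} rather than proving it. (Also, the pairing estimate is a $W^{s,p}$--$W^{-s,p'}$ duality rather than H\"older, which needs the test fields to sit in the correct dual space.)

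For comparison, the paper's proof sidesteps all of this: it introduces the space $\mathcal{TH}^p_s(\p\Omega)$ of tangential traces of $\Hspcurl{s}{p}{\Omega}$ fields equipped with the quotient (infimum) norm, observes that $(\nu\times\,\cdot\,)|_{\p\Omega}$ is by construction an isomorphism from $\Hspcurl{s}{p}{\Omega}/H^{s,p}_0(\operatorname{curl};\Omega)$ onto this space, and then quotes Theorem 3.6 of \cite{M2}, which states precisely that the quotient norm is equivalent to the $TB^{s-1/p}_{p,p}(\p\Omega)$ norm in the range $1<p<\infty$, $-1+1/p<s<1/p$. Continuity of the trace and the existence of the bounded right inverse $\eta_{t,p}$ both drop out of that single norm equivalence. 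If you want to keep your structure, the fix is to cite Theorem 3.6 of \cite{M2} (or re-prove its divergence-control half) at the point where you currently identify the dual space, and to delete or re-scope the $s>1/p$ paragraph.
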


\begin{proof}
Following \cite{M2} we define\footnote{Note that the smoothness index $s$ is interpreted differently 
in the spaces $TH^s(\p\Omega)$ and $\mathcal{TH}^{p}_{s}(\p\Omega)$.}
$$
\mathcal{TH}^{p}_{s}(\p\Omega) := \{ f \in B^{s-1/p}_{p,p}(\p\Omega) \,:\,
\exists U \in \Hspcurl{s}{p}{\Omega}, \text{ s.t. } \nu\times U|_{\p\Omega} = f \},
$$
and give it the norm 
$$
\| f\|_{ \mathcal{TH}^p_s(\p\Omega)} := 
\inf \{ \| U \|_{\Hspcurl{s}{p}{\Omega} } \,:\, \nu \times U |_{\p\Omega} = f\}.
$$
The tangential trace $(\nu \times \,\cdot\,)|_{\p\Omega}$ is hence an 
(topological and algebraic) isomorphism from
$$
(\nu \times \,\cdot\,)|_{\p\Omega} : 
\Hspcurl{s}{p}{\Omega}/H_0^{s,p}(\operatorname{curl};\,\Omega) 
\to \mathcal{TH}^{p}_{s}(\p\Omega). 
$$
By Theorem 3.6 in \cite{M2} 
$$
\| \, \cdot \,\|_{ \mathcal{TH}^{p}_{s}(\p\Omega) } \approx \| \,\cdot\, \|_{ TB^{s-1/p}_{p,p}(\p\Omega) },
$$
when $1<p<\infty$ and $-1+1/p < s < 1/p$.
Thus we have an isomorphism
$$
(\nu \times \,\cdot\,)|_{\p\Omega} : 
\Hspcurl{s}{p}{\Omega}/H_0^{s,p}(\operatorname{curl};\,\Omega) 
\to TB^{s-1/p}_{p,p}(\p\Omega), 
$$
which is continuous and has a continuous inverse.

\end{proof}

\medskip
\noindent
\textbf{Acknowledgments.} The author would like to thank Pedro Caro and Luca Rondi for 
some useful discussions. The author is supported by the grant PGC2018-094528-B-I00.

\end{document}